\author{Julien Melleray}
\address{Universit\'e Claude Bernard -- Lyon 1 \\
  Institut Camille Jordan, CNRS UMR 5208 \\
  43 boulevard du 11 novembre 1918 \\
  69622 Villeurbanne Cedex \\
  France}
\numberwithin{equation}{section}
\title{Dynamical simplices and Fra\"iss\'e theory}
\begin{document}
\maketitle
\begin{abstract}
We simplify a criterion (due to Ibarluc\'ia and the author) which characterizes dynamical simplices, that is, sets $K$ of probability measures on a Cantor space $X$ for which there exists a minimal homeomorphism of $X$ whose set of invariant measures coincides with $K$. 
We then point out that this criterion is related to Fra\"iss\'e theory, and use that connection to provide a new proof of Downarowicz' theorem stating that any nonempty metrizable Choquet simplex is affinely homeomorphic to a dynamical simplex. The construction enables us to prove that there exist minimal homeomorphisms of a Cantor space which are speedup equivalent but not orbit equivalent, answering a question of D. Ash.

\end{abstract}

\section{Introduction}
In this paper, we continue investigations initiated in \cite{Ibarlucia2016} concerning \emph{dynamical simplices}, that is, sets $K$ of probability measures on a Cantor space $X$ such that there exists a minimal (i.e.~such that all orbits are dense) homeomorphism of $X$ whose set of invariant measures coincides with $K$. Dynamical simplices are natural invariants of orbit equivalence and, in fact, a famous theorem of Giordano--Putnam--Skau \cite{Giordano1995} asserts that they are complete invariants of orbit equivalence for minimal homeomorphisms (see section \ref{s:speedups} for details on orbit equivalence, speedups, and the Giordano--Putnam--Skau theorem). The main theorem of \cite{Ibarlucia2016} is the following.

\begin{theorem*}[Ibarluc\'ia--Melleray \cite{Ibarlucia2016}]
Let $X$ be a Cantor space. A set $K \subset P(X)$ is a dynamical simplex if and only if:
\begin{enumerate}
\item $K$ is compact and convex.
\item All elements of $K$ are atomless and have full support.
\item $K$ satisfies the \emph{Glasner--Weiss condition}: whenever $A,B$ are clopen subsets of $X$ and $\mu(A)< \mu(B)$ for all $\mu \in K$, there exists a clopen $C \subseteq B$ such that $\mu(A)=\mu(C)$ for all $\mu \in K$.
\item $K$ is \emph{approximately divisible}: for any clopen $A$, any integer $n$, and any $\varepsilon >0$, there exists a clopen $B \subset A$ such that $\mu(A)-\varepsilon \le n \mu(B) \le \mu(A)$  
\end{enumerate}
\end{theorem*}

The first two conditions are obviously necessary for $K$ to be the set of invariant measures for a minimal action of any group by homeomorphisms of $X$; the fact that the third one is necessary follows from a theorem of Glasner--Weiss \cite{Glasner1995a}, hence the terminology we adopt here. When \cite{Ibarlucia2016} was completed, the status of approximate divisibility was more ambiguous: on the one hand, it played a key part in the arguments; on the other hand, this assumption seems rather technical, and it is not hard to see that when $K$ has finitely many extreme points approximate divisibility is a consequence of the three other conditions in the above theorem. Thus it was asked in \cite{Ibarlucia2016} whether this assumption is really necessary; we prove here that it is in fact redundant, thus simplifying the characterization of a dynamical simplex and giving it its final form.

\begin{theorem*}
Let $X$ be a Cantor space. Assume that $K \subset P(X)$ is compact and convex; all elements of $K$ are atomless and have full support; and $K$ satisfies the Glasner--Weiss condition. Then $K$ is approximately divisible (hence $K$ is a dynamical simplex). 
\end{theorem*}

This theorem is obtained as a corollary of the following result, which is of independent interest.

\begin{theorem*}
Let $X$ be a Cantor space, and $G$ be a group of homeomorphisms of $X$ such that each $G$-orbit is dense. Then the set of all $G$-invariant Borel probability measures on $X$ is approximately divisible.
\end{theorem*}

The proof of this result is similar in spirit to some of the arguments used in \cite{Ibarlucia2016}. A more novel aspect of the work presented here is that we exploit a connection between dynamical simplices and Fra\"iss\'e theory, which enables us to build interesting examples. Along those lines, we obtain a new and rather elementary proof of a well-known theorem of Downarowicz.

\begin{theorem*}[Downarowicz \cite{Downarowicz1991}]
For any nonempty metrizable Choquet simplex $K$, there exists a minimal homeomorphism $\varphi$ of a Cantor space $X$ such that $K$ is affinely homeomorphic to the set of all $\varphi$-invariant Borel probability measures on $X$. 
\end{theorem*}

An interesting aspect of the construction used to prove this result is its flexibility; we exploit this to prove the following theorem.

\begin{theorem*}
There exist two dynamical simplices $K,L$ of a Cantor space $X$, and homeomorphisms $g,h \in \Homeo(X)$ such that $g_*K \subset L$ and $h_*L \subset K$, yet there is no $f \in \Homeo(X)$ such that $f_*K=L$.
\end{theorem*}

This answers a question recently raised by Ash \cite{Ash2016} and shows that the relation of speedup equivalence is strictly coarser than the relation of orbit equivalence. 

The paper is organized as follows: we first prove that approximate divisibility is redundant in the characterization of dynamical simplices. Then we outline some basics of Fra\"iss\'e theory and explain why this theory is relevant to the study of dynamical simplices. We exploit that connection to prove Downarowicz's theorem on realizability of Choquet simplices as dynamical simplices, and then adapt this construction to show that there exist minimal homeomorphisms which are speedup equivalent but not orbit equivalent. \\

\noindent {\bf Acknowledgements.} The research presented here was partially supported by ANR project GAMME (ANR-14-CE25-0004). I am grateful to T. Ibarluc\'ia for valuable corrections, comments and suggestions about the first draft of this paper. I thank T. Giordano for providing a much-needed bibliographical reference. An anonymous referee's comments and suggestions hopefully helped improve the paper's readability.

\section{Approximate divisibility of simplices of invariant measures}
Throughout this section $X$ stands for a Cantor space, and $P(X)$ denotes the space of Borel probability measures on $X$, with its usual topology (induced by the maps $\mu \mapsto \mu(A)$ as $A$ runs over all clopen subsets of $X$). 
Our objective in this section is to prove the following theorem, and then deduce from it that approximate divisibility is redundant in the characterization of a dynamical simplex.

\begin{theorem}\label{t:invariant}
Let  $G$ be a group of homeomorphisms of a Cantor space $X$ such that each $G$-orbit is dense. Then the set $K_G$ of all $G$-invariant Borel probability measures on $X$ is approximately divisible.
\end{theorem}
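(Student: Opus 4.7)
Fix a clopen $A\subseteq X$, an integer $n\geq 1$, and $\varepsilon>0$. The plan is to produce a clopen partition $B=\bigsqcup_{j=1}^N B_j$ of a clopen $B\subseteq A$, together with elements $g_{j,2},\dots,g_{j,n}\in G$ for each $j$ (setting $g_{j,1}=\mathrm{id}$), so that the family $\{g_{j,k}B_j : 1\leq j\leq N,\ 1\leq k\leq n\}$ consists of pairwise disjoint clopen subsets of $A$ whose complement in $A$, call it $R$, satisfies $\mu(R)<\varepsilon$ for every $\mu\in K_G$. If such a configuration exists then, by $G$-invariance of each $\mu\in K_G$,
\[
n\mu(B)=\sum_{j,k}\mu(g_{j,k}B_j)=\mu(A)-\mu(R)\in(\mu(A)-\varepsilon,\mu(A)],
\]
which is exactly approximate divisibility. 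Note that every $\mu \in K_G$ is atomless with full support, since dense $G$-orbits force all orbits to be infinite and force the closed $G$-invariant set $\mathrm{supp}(\mu)$ to equal $X$.

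The building block I would use is a \emph{local tower extension}. Suppose a configuration has already been built with clopen residual $R\neq\emptyset$. Pick any $x\in R$; by density of the orbit $Gx$ in $X$, there exist $h_2,\dots,h_n\in G$ with $h_2x,\dots,h_nx$ pairwise distinct points of $R\setminus\{x\}$. Continuity of the $h_i$ lets us choose a clopen neighbourhood $U\subseteq R$ of $x$ small enough that $U,h_2U,\dots,h_nU\subseteq R$ remain pairwise disjoint. Adjoining $(U;h_2,\dots,h_n)$ as a new block strictly shrinks the clopen residual by the clopen set $U\sqcup h_2U\sqcup\cdots\sqcup h_nU$, so the step can always be executed as long as $R\neq\emptyset$.

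The main obstacle is to drive $\mu(R)<\varepsilon$ \emph{uniformly} over the compact set $K_G$, since a single local extension only reduces $\mu(R)$ by $n\mu(U)$ and different measures may assign vanishing mass to $U$. I would exploit that $\mu\mapsto\mu(R)$ is continuous on $K_G$ and that $K_G$ is weak-$*$ compact and metrizable: fix a finite $(\varepsilon/2)$-net $\mu_1,\dots,\mu_m\in K_G$ in a compatible metric. For each $\mu_i$ in turn, I would perform a sequence of local tower extensions that target the ``large-mass'' part of the current residual—using atomlessness and full support of $\mu_i$ to locate points $x\in R$ admitting clopen neighbourhoods $U$ of definite $\mu_i$-measure compatible with a local tower—thereby reducing $\mu_i(R)$ below $\varepsilon/2$ in finitely many steps. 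Since subsequent extensions only shrink the residual further, the bound $\mu_i(R)<\varepsilon/2$ persists; processing each $\mu_i$ in sequence and then invoking the density of the net in $K_G$ together with continuity of $\mu\mapsto\mu(R)$ would finally deliver $\mu(R)<\varepsilon$ for every $\mu\in K_G$. The delicate point — and the place where I expect the argument to require the most care — is the quantitative lemma asserting that a local tower whose base has definite $\mu_i$-mass always exists in the current clopen residual, for which I anticipate needing a careful interplay between invariance, full support, and the freedom to choose the translating elements $h_j$.
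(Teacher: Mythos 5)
Your reduction of approximate divisibility to the existence of a disjoint family of height-$n$ clopen towers inside $A$ with a residual of uniformly small measure is a correct reformulation, but the two steps you yourself flag as delicate are genuine gaps, and neither is easily repaired within your framework. For the single-measure step you need a quantitative lemma producing a tower base $U$ in the current residual $R$ with $\mu_i(U)$ bounded below in terms of $\mu_i(R)$; the obvious route (cover $R$ by finitely many clopen tower bases, disjointify, take the largest) only yields $\mu_i(U)\ge \mu_i(R)/p$ with $p$ the uncontrolled size of the cover, which degrades from stage to stage, so the greedy process is not shown to terminate, nor even to drive $\mu_i(R_k)$ to $0$ along an infinite run. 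The obstruction your local extensions sidestep rather than solve is that the many towers produced by a covering of $R$ overlap one another and must be merged into a single disjoint family; that merging is the technical heart of the problem. Second, the finite-net argument for uniformity over $K_G$ is circular: an $\varepsilon/2$-net in a compatible metric on $P(X)$ controls $|\mu(C)-\mu_i(C)|$ only with a modulus depending on the clopen set $C$, whereas your final residual $R$ is produced after the net has been fixed, so knowing $\mu_i(R)<\varepsilon/2$ for all $i$ gives no bound on $\mu(R)$ for a general $\mu\in K_G$.

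The paper's proof avoids both difficulties simultaneously by a different decomposition: for $N>n/\varepsilon$ it constructs a clopen partition of \emph{all} of $A$ (no residual at all) into finitely many columns $(U_{i,0},\dots,U_{i,n_i})$ with $n_i\ge N$ and $U_{i,j}=g^i_j(U_{i,0})$ for elements $g^i_j\in G$; the existence of such partitions is exactly where the work lies (Lemmas \ref{l:inductivestep1} and \ref{l:inductivestep}, which show how to merge two overlapping such partitions into one covering the union). Given such a partition, one regroups the levels of each column modulo $n$ and takes $B$ to be the union of one level out of each group of $n$; since every level has the same measure as its base for every invariant measure, the leftover has measure at most $(n-1)\,\mu(\bigcup_i U_{i,0})\le\frac{n-1}{N}\mu(A)<\varepsilon$ simultaneously for all $\mu\in K_G$, so uniformity is automatic. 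If you want to rescue your scheme, the quantitative lemma you need (a height-$n$ tower of definite relative mass inside an arbitrary clopen $R$, uniformly over $K_G$) is essentially equivalent to the existence of $n$-dividing partitions of $R$, i.e., to the merging lemma you would in any case have to prove.
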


For the remainder of this section, we fix $G$ as above.

\begin{defn}
Fix an integer $N$, and a clopen set $A$. An \emph{$N$-dividing partition} of $A$ is a finite clopen partition $(U_{i,j})_{i \in I, j \in \{0,\ldots,n_i\}}$ of $A$ such that $n_i \ge N$ for all $i$ and there exists $g^i_1,\ldots,g^i_{n_i}$ in $G$ such that 
$g^i_{j}(U_{i,0})=U_{i,j}$ for all $i$ and all $j \in \{1,\ldots,n_i\}$. 

When $\mathcal U$ is an $N$-dividing partition of $A$, we also say that $\mathcal U$ covers $A$.
\end{defn}
By analogy with Kakutani--Rokhlin partitions we say that  $\{U_{i,j} \colon 0 \le j \le n_i\}$ is a column of the partition, with base $U_{i,0}$ (though the actual ordering of the partition does not matter here). Note that it follows from the definition of an $N$-dividing partition that $\mu(U_{i,j})=\mu(U_{i,0})$ for all $i$ and $j$ and all $\mu \in K_G$.

There are two operations on $N$-dividing partitions which will be useful to us. Given an $N$-dividing partition $\mcU$, a column $\mcC$ of $\mcU$ with base $U_{i,0}$, and a clopen partition $V_1,\ldots,V_k$ of $U_{i,0}$, we can form a new partition by replacing $\mcC$ with columns $(V_1,g^{i}_1(V_1),\ldots,g^{i}_{n_{i}}(V_1)),\ldots,(V_k,g^{i}_1(V_k),\ldots,g^{i}_{n_{i}}(V_k))$. The other operation is that, when $\mcU$ covers a clopen set $A$, $V$ is a clopen subset disjoint from $A$, $U_{i,j}$ is an atom of $\mcU$ and there exists $g \in G$ such that $g(U_{i,j})=V$, then we may extend $\mcU$ to an $N$-dividing partition that covers $A \cup V$ by setting $U_{i,n_i+1}=V$ and $g^i_{n_i+1}=g g^i_j$; we say that we have added $V$ on top of the $i$-th column of $\mcU$. 

If $\mcU'$ is obtained from $\mcU$ by applying these two operations finitely many times, we say that $\mcU'$ \emph{refines} $\mcU$. 

\begin{lemma}\label{l:inductivestep1}
Let $\mathcal U= (U_{i,j})_{i \in I, j \in \{0,\ldots,n_i\}}$ be an $N$-dividing partition of some clopen $A$, and $(V_0,h_1(V_0),\ldots,h_m(V_0))$
be an $N$-dividing partition of some clopen set $B$. Then there exists an $N$-dividing partition $\mathcal U'$ which refines $\mathcal U$ and covers a clopen $A' \supseteq A$, and $W_0 \subseteq V_0$ which is disjoint from $A'$, such that the union of $A'$ and $W_0,\ldots,h_m(W_0)$ is equal to $A \cup B$.
\end{lemma}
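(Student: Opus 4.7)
The plan is to partition $V_0$ according to the ``itinerary'' of each of its points under the translates $h_0, \ldots, h_m$ (with $h_0 = \mathrm{id}$): for each $v \in V_0$ and each index $k$, the image $h_k(v)$ either sits in a unique atom $U_{i,j}$ of $\mathcal{U}$, or lies outside $A$ entirely. Encoding this data attaches to $v$ a function $\tau_v$ from $\{0, \ldots, m\}$ to $(\text{atoms of } \mathcal{U}) \cup \{*\}$, and since $V_0$ and the atoms are clopen, the corresponding level sets $V_0^\tau$ form a finite clopen partition of $V_0$. I would then set $W_0 := V_0^{\tau^*}$, where $\tau^*$ is the constant-$*$ itinerary; by construction its translates $h_k(W_0)$ all avoid $A$.

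For every other $\tau$, I would fix some $k_\tau$ with $\tau(k_\tau) = (i_\tau, j_\tau) \ne *$ and pull $h_{k_\tau}(V_0^\tau) \subseteq U_{i_\tau, j_\tau}$ back to the base of its column via $g^{i_\tau}_{j_\tau}$ to obtain a clopen set $S^\tau \subseteq U_{i_\tau, 0}$. I would then apply the base-splitting operation to each column of $\mathcal{U}$, partitioning $U_{i,0}$ along the Boolean algebra generated by $\{S^\tau : i_\tau = i\}$; in the resulting refinement $\mathcal{U}_1$, each $h_{k_\tau}(V_0^\tau)$ becomes a disjoint union of atoms of column $i_\tau$.

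The final step is to glue the ``out'' translates onto the appropriate columns. For each $\tau \ne \tau^*$, each $k$ with $\tau(k) = *$, and each refined base atom $U_{i_\tau, 0}^\beta$ lying inside $S^\tau$, I would invoke the adding-on-top operation to place the clopen piece $h_k h_{k_\tau}^{-1}\bigl(g^{i_\tau}_{j_\tau}(U_{i_\tau, 0}^\beta)\bigr)$ on top of column $i_\tau$, witnessed by the element $h_k h_{k_\tau}^{-1} \in G$. Two disjointness checks are needed: every added piece sits in some $h_k(V_0^\tau) \subseteq B \setminus A$, and two distinct added pieces are disjoint (they lie in disjoint $h_k(V_0)$'s when $k$ varies, in disjoint $h_k(V_0^\tau)$'s for fixed $k$ when $\tau$ varies, and in disjoint translates of disjoint base atoms when $\beta$ varies). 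With these in hand, the additions can be carried out iteratively.

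The only serious obstacle I foresee is the combinatorial bookkeeping: the number of itineraries grows fast, and one must control how the simultaneous base-refinements for different $\tau$'s interact within a single column. Once this is nailed down, the required identity $A' \cup \bigcup_k h_k(W_0) = A \cup B$ follows by cases on $(\tau, k)$: translates with $\tau(k) \ne *$ are already inside $A$, those with $\tau(k) = *$ and $\tau \ne \tau^*$ have been absorbed by the additions above, and the itinerary $\tau^*$ contributes exactly $W_0$ and its translates.
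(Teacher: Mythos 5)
Your argument is correct, and it relies on the same two operations as the paper's proof (splitting the base of a column and replicating along it, then adding out-of-$A$ translates on top via composite group elements), but the decomposition is genuinely different. The paper sets $W_0 = V_0 \setminus A$ and only partitions $V = V_0 \cap A$, according to which translates $h_j$ send a point back into $A$; it refines $\mathcal{U}$ to be compatible with that partition of $V$, so every class is anchored at time $0$ through an atom of the refined partition contained in $V$, and the leftover translates are stacked on top. You instead take the smaller set $W_0 = \{v \in V_0 : h_k(v) \notin A \text{ for all } k\}$ and anchor each remaining itinerary class at an arbitrary time $k_\tau$ at which it lies inside $A$, pulling back to the base via $g^{i_\tau}_{j_\tau}$. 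Both choices satisfy the lemma, and your disjointness checks are the right ones; in fact your version proves the stronger statement that $A'$ is disjoint from \emph{all} of $W_0, h_1(W_0), \ldots, h_m(W_0)$ rather than from $W_0$ alone, which would let Lemma \ref{l:inductivestep} be deduced from a single application of your construction instead of the $m$-fold iteration the paper performs. Finally, the ``serious obstacle'' you flag is not one: within each column $i$ you refine the base once by the finite partition generated by all the sets $S^\tau$ with $i_\tau = i$ (exactly as you propose), there are only finitely many itineraries with nonempty level set, and the different columns do not interact, so no further bookkeeping is required.
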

Note that the only difficulty is that $A$ and $B$ may not be disjoint, and the two $N$-dividing partitions may overlap, requiring a bit of care. 

\begin{proof}
As above we denote by $(g^i_j)$ some elements of $G$ witnessing that $\mathcal U$ is an $N$-dividing partition. 
Let $V=V_0 \cap A$; if $V=\emptyset$ we have nothing to do as $W_0=V_0$ works. Otherwise we set $W_0=V_0 \setminus V$. We then let $\mathcal P$ denote the clopen partition of $V$ generated by $V \cap h_1^{-1}(A), \ldots, V \cap h_m^{-1}(A)$. Note that for any atom $C$ of $\mathcal P$ and all $j$, $h_j(C)$ is either contained in $A$ or disjoint from it. Then, using the maps $g^i_j$, we may refine $\mathcal U$ (by partitioning the base of each column and replicating the corresponding partition along the column via $g^i_j$) to form a new $N$-dividing partition $\mathcal V$ which refines $\mcU$ and is finer than $\mathcal P \cup \{A \setminus V\}$. This implies that, for any atom $C$ of $\mathcal V$, either $C$ is contained in $V$ or is disjoint from $V$; and if $C$ is contained in $V$ then each of $C,h_1(C),\ldots, h_m(C)$ is either contained in $A$ or disjoint from it.

Given one of the columns of $\mathcal V$ which meets $V$, with base $V_{i,0}$, list the atoms $C_1,\ldots,C_k$ in this column that are contained in $V$; for each $l \in \{1,\ldots,k\} $ let $J_l \subseteq \{1,\ldots,m\}$ denote the set of all indices $j$ such that $h_j(C_l) \cap A = \emptyset$. We add each $h_j(C_l)$ for $j \in J_l$ on top of our column (if $J_l$ is empty the colum is not modified), which is fine since these sets are pairwise disjoint, are also disjoint from all elements of $\mathcal V$, and there is an element of $G$ mapping $V_{i,0}$ onto $h_j(C_l)$.

Once we have done this for all the columns of $\mathcal V$ which intersect $V$, we have produced an $N$-dividing partition $\mathcal U'$ which refines $\mcU$ and covers $A'=A \cup V \bigcup_{i\ge 1} h_i(V)$. Since $V$ as well as each $h_i(V)$ are disjoint from $W_0$, and $W_0$ is disjoint from $A$, we see that $A'$ and $W_0$ are disjoint, as required. 
\end{proof}

\begin{lemma}\label{l:inductivestep}
Fix an integer $N$, and assume that $A$, $B$ are clopen subsets such that there exists an $N$-dividing partition of $A$ and an $N$-dividing partition of $B$. Then there is an $N$-dividing partition of $A \cup B$.
\end{lemma}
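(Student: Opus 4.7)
The plan is to prove Lemma~\ref{l:inductivestep} by induction on the number of columns of the given $N$-dividing partition of $B$, absorbing one column at a time via Lemma~\ref{l:inductivestep1}. Enumerate the columns as $\mathcal{C}_1, \ldots, \mathcal{C}_k$, where $\mathcal{C}_\ell = (V_0^{(\ell)}, h_1^{(\ell)}(V_0^{(\ell)}), \ldots, h_{m_\ell}^{(\ell)}(V_0^{(\ell)}))$ is a single-column $N$-dividing partition of a clopen set $B_\ell$, with $m_\ell \ge N$ and $B = B_1 \sqcup \cdots \sqcup B_k$. Set $A_i := A \cup B_1 \cup \cdots \cup B_i$, so that $A_0 = A$ and $A_k = A \cup B$.

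I would construct $N$-dividing partitions $\mathcal{U}_i$ of $A_i$ recursively, taking $\mathcal{U}_0 := \mathcal{U}$. For the inductive step, given $\mathcal{U}_{i-1}$, apply Lemma~\ref{l:inductivestep1} to $\mathcal{U}_{i-1}$ and the single-column partition $\mathcal{C}_i$. This produces a refined $N$-dividing partition $\mathcal{U}'$ covering a clopen $A' \supseteq A_{i-1}$, together with $W_0 \subseteq V_0^{(i)}$ disjoint from $A'$, satisfying $A' \cup W_0 \cup h_1^{(i)}(W_0) \cup \cdots \cup h_{m_i}^{(i)}(W_0) = A_i$. Form $\mathcal{U}_i$ by appending to $\mathcal{U}'$ a fresh column with base $W_0$ and atoms $(W_0, h_1^{(i)}(W_0), \ldots, h_{m_i}^{(i)}(W_0))$, which meets the $N$-dividing height requirement since $m_i \ge N$. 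After $k$ iterations, $\mathcal{U}_k$ is the desired $N$-dividing partition of $A \cup B$.

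I expect the delicate point to be verifying that the new column can truly be appended at each step. Lemma~\ref{l:inductivestep1} as stated only explicitly asserts $W_0 \cap A' = \emptyset$, but for the appended column to produce a genuine $N$-dividing partition one needs each $h_j^{(i)}(W_0)$ to be disjoint from $A'$ as well: the refinement used in the proof of Lemma~\ref{l:inductivestep1} only controls the orbit behaviour over $V_0^{(i)} \cap A_{i-1}$, not over $V_0^{(i)} \setminus A_{i-1}$. The natural remedy is to sharpen the choice of $W_0$ by further partitioning $V_0^{(i)} \setminus A_{i-1}$ according to the pattern $\{\,j : h_j^{(i)}(x) \in A_{i-1}\,\}$, keeping as $W_0$ only the cell with empty pattern and absorbing each of the remaining cells into $\mathcal{U}'$ through the same add-on-top-of-a-column procedure that appears in the proof of Lemma~\ref{l:inductivestep1}. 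With this extra bookkeeping, the disjointness needed at each inductive step is automatic and the argument goes through.
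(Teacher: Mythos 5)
Your proof is correct and follows essentially the same route as the paper: induction on the number of columns of the partition of $B$, reduction to a single column, an application of Lemma \ref{l:inductivestep1}, and then a fix-up for the translates $h_j(W_0)$ that may still meet the covered set --- you correctly identified that a single black-box application of Lemma \ref{l:inductivestep1} does not suffice, which is exactly the point the paper's argument addresses. The only difference is organizational: you clear the obstruction in one pass by partitioning $W_0$ according to the full pattern $\{j \colon h_j(x) \in A_{i-1}\}$ and absorbing the bad cells, whereas the paper iterates the construction of Lemma \ref{l:inductivestep1} $m$ times, shrinking $W$ so as to clear one translate $h_i(W)$ at a time.
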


\begin{proof}
By induction on the number of columns of the $N$-dividing partition of $B$, it is enough to consider the case where it is of the form $(V_0,h_1(V_0),\ldots,h_m(V_0))$ for some clopen $V_0$ and some $m \ge N$. Using the construction of Lemma \ref{l:inductivestep1} $m$ times, we produce $N$-dividing partitions $\mathcal U_i$ which refine $U$, and clopen sets $W_i$ contained in $V_0$ such that
\begin{itemize}
\item For all $i < m$, $\mathcal U_{i+1}$ refines $\mathcal U_i$ and $W_{i+1} \subseteq W_i$;
\item For all $i\le m$ the elements of $\mathcal U_{i}$ do not intersect $W_i, h_1(W_i),\ldots,h_i(W_i)$;
\item For all $i \le m$ the union of $\mathcal U_i$ and $(W_i,h_1(W_i),\ldots,h_m(W_i))$ covers $A \cup B$.
\end{itemize}
In the end, $\mcU_m$ is disjoint from $W_m,h_1(W_m),\ldots,h_m(W_m)$, so we may just add $(W_m,h_1(W_m),\ldots,h_m(W_m))$ as a new column to $\mcU_m$ to obtain the desired $N$-dividing partition of $A \cup B$.
\end{proof}

\begin{prop}
For any integer $N$ and any clopen $A$ there exists an $N$-dividing partition of $A$.
\end{prop}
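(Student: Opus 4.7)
The plan is to build, for each $x \in A$, a single-column $N$-dividing partition of a clopen set $B_x \subseteq A$ containing $x$, and then apply Lemma \ref{l:inductivestep} together with compactness to paste these local partitions into an $N$-dividing partition of all of $A$.

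The first step is to exhibit $N$ suitable group elements. Fix $x \in A$. Since $G \cdot x$ is dense in the Cantor space $X$, and $X$ has no isolated points, the orbit $G \cdot x$ is infinite; the same reasoning applied to the nonempty clopen set $A$ (which, as an open subset of the Cantor space, has no isolated points either) shows that $G \cdot x \cap A$ is infinite. I can therefore pick $g^x_1, \ldots, g^x_N \in G$ such that $x, g^x_1(x), \ldots, g^x_N(x)$ are $N+1$ pairwise distinct points of $A$.

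Next, I pick the clopen base. Using continuity of the $g^x_i$ and the fact that clopen sets form a basis for $X$, I choose a clopen neighborhood $U_x$ of $x$ small enough that $U_x \subseteq A \cap \bigcap_i (g^x_i)^{-1}(A)$ and that $U_x, g^x_1(U_x), \ldots, g^x_N(U_x)$ are pairwise disjoint. (Start from the clopen neighborhood $A \cap \bigcap_i (g^x_i)^{-1}(A)$ of $x$, and shrink it so its images under $\mathrm{id}, g^x_1, \ldots, g^x_N$ are separated, which is possible since the $N+1$ image points are distinct and $X$ is totally disconnected.) Setting $B_x = U_x \sqcup g^x_1(U_x) \sqcup \cdots \sqcup g^x_N(U_x) \subseteq A$, we obtain a single-column $N$-dividing partition of the clopen set $B_x$.

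Finally, compactness of $A$ applied to the open cover $\{U_x : x \in A\}$ yields a finite subcover $U_{x_1}, \ldots, U_{x_k}$. An iterated application of Lemma \ref{l:inductivestep} produces an $N$-dividing partition of $B_{x_1} \cup \cdots \cup B_{x_k}$; since $U_{x_i} \subseteq B_{x_i} \subseteq A$ and the $U_{x_i}$ cover $A$, this union equals $A$, finishing the argument. The only subtlety, and the step I would expect to require the most care, is ensuring that one has enough orbit points inside $A$ to form a column of length $N+1$ entirely within $A$; this is handled by observing that a dense subset of a nonempty open set in a perfect space is necessarily infinite.
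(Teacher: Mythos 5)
Your proof is correct and follows essentially the same route as the paper's: use density of the $G$-orbit of each $x\in A$ to find $g_1,\dots,g_N$ and a clopen $U_0\ni x$ with $U_0,g_1(U_0),\dots,g_N(U_0)$ pairwise disjoint and contained in $A$, then cover $A$ by finitely many such single-column pieces via compactness and merge them with Lemma \ref{l:inductivestep}. You simply spell out the details (infiniteness of the orbit inside $A$, shrinking the neighborhood) that the paper leaves implicit.
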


\begin{proof}
Since  by assumption the $G$-orbit of any $x \in A$ is dense, its intersection with $A$ is infinite and we may find some clopen subset $U_0 \ni x$ and elements $g_1,\ldots,g_N$ of $G$ such that $U_0,g_1(U_0), \ldots,g_N(U_0)$ are disjoint and contained in $A$. Thus there exists a covering of $A$ by clopen sets which can each be covered by an $N$-dividing partition, and by compactness there exists such a covering which is finite. Then we obtain the desired $N$-dividing partition of $A$ from Lemma \ref{l:inductivestep}.
\end{proof}

\begin{proof}[Proof of Theorem \ref{t:invariant}]
Fix a nonempty clopen subset $A$ of $X$, an integer $n$ and $\varepsilon >0$. 
Then pick an $N$-dividing partition of $A$ for some $N > \frac{n}{\varepsilon}$; let us denote it as before by $(U_{i,j})_{i \in I, j \in \{0,\ldots,n_i\}}$ .For each $i$, we write $n_i+1= n p_i + q_i$ for some $q_i \in \{0,\ldots,n-1\}$ and then define, for all $k \in \{0,\ldots,n-1\}$,
$$B_k = \bigsqcup_{i \in I} \bigsqcup_{l=0}^{p_i-1} U_{i,k+nl}\ .  $$
Since the definition of an $N$-dividing partition implies that $\mu(U_{i,j})=\mu(U_{i,0})$ for all $i,j$ and $\mu \in K_G$, we have 
 $\mu(B_0)=\mu(B_1)=\ldots = \mu(B_{n-1})$ for all $\mu \in K_G$. Hence $n \mu(B_0) \le \mu(A)$ for all $\mu \in K_G$ since $B_0,\ldots,B_{n-1}$ are disjoint and contained in $A$. Also by construction, for any $\mu \in K_G$ we have 
$$\mu(A \setminus \bigsqcup_{k=0}^{n-1} B_k) \le \sum_{i \in I} q_i \mu(U_{i,0}) \le (n-1) \mu(\bigcup_{i \in I} U_{i,0})$$
In any $N$-dividing partition of $A$, the union of the bases of the columns must have $\mu$-measure less that $\frac{\mu(A)}{N}$ for all $\mu \in K_G$, so what we just obtained implies that
$$\mu(A) - n \mu(B_0) \le \frac{n-1}{N} \mu(A) \le \varepsilon \ . $$
Thus, $B=B_0$ is such that $\mu(A) - \varepsilon \le n \mu(B) \le \mu(A)$, proving that $K_G$ is approximately divisible.
\end{proof}

We are now ready to prove that the assumption of approximate divisibility in the definition of a dynamical simplex is redundant.

\begin{cor} \label{t:whydivide}
Assume that $K \subset P(X)$ is compact and convex, all elements of $K$ are atomless and have full support, and $K$ satisfies the Glasner--Weiss condition. Then $K$ is approximately divisible. 
\end{cor}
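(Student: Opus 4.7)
The plan is to reduce the corollary to Theorem~\ref{t:invariant}. The key observation is that approximate divisibility is inherited by subsets: if $K \subseteq K'$ and $K'$ is approximately divisible, then a clopen $B$ witnessing approximate divisibility for $K'$ (for given $A$, $n$, $\varepsilon$) works for all $\mu \in K'$, hence in particular for all $\mu \in K$. Thus it suffices to exhibit a subgroup $G$ of $\Homeo(X)$ acting minimally on $X$ such that every element of $K$ is $G$-invariant; then Theorem~\ref{t:invariant} gives approximate divisibility of $K_G \supseteq K$, and hence of $K$.

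To build such a $G$, I would first establish the following key lemma: whenever $A, B$ are clopen subsets of $X$ with $\mu(A) = \mu(B)$ for all $\mu \in K$, there exists a homeomorphism $\sigma \colon A \to B$ such that $\mu(\sigma(C)) = \mu(C)$ for every clopen $C \subseteq A$ and every $\mu \in K$. This is obtained by a standard back-and-forth construction: enumerate clopen bases $(A_n)$, $(B_n)$ of $A$ and $B$, and inductively build refining clopen partitions $\mathcal A_n$ of $A$ and $\mathcal B_n$ of $B$ together with a bijection between corresponding atoms preserving every $\mu \in K$. At each step one refines one side according to the next basis element, then uses the Glasner--Weiss condition on the other side to split each matching atom with the prescribed $K$-measure. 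The only obstacle is that Glasner--Weiss requires a strict inequality; this is dealt with by using the full-support hypothesis, which forces that whenever $\mu(A' \cap A_n) = \mu(A')$ for some $\mu \in K$, one has $A' \subseteq A_n$, so no splitting is required on the corresponding $B$-atom. In the limit, the atoms shrink to points (the bases being alternated) and one obtains a clopen bijection, hence a homeomorphism; extending by the identity (or by a symmetric construction swapping $A$ and $B$) produces a self-homeomorphism of $X$ preserving each $\mu \in K$.

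To verify minimality of the action of the group $G$ generated by such homeomorphisms, fix $x \in X$ and a nonempty clopen $U$. Since $\mu \mapsto \mu(U)$ is continuous and positive on the compact set $K$, $\delta := \inf_{\mu \in K} \mu(U)$ is positive. Choose a nested clopen neighborhood basis $(V_n)$ of $x$ with $\bigcap_n V_n = \{x\}$; the functions $\mu \mapsto \mu(V_n)$ are continuous and decrease pointwise to $0$ by atomlessness, so Dini's theorem yields $n$ with $\sup_{\mu \in K} \mu(V_n) < \delta$. The Glasner--Weiss condition then produces a clopen $V' \subseteq U$ with $\mu(V') = \mu(V_n)$ for all $\mu \in K$, and the preceding lemma gives a $K$-preserving homeomorphism carrying $V_n$ onto $V'$; in particular it sends $x$ into $U$.

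The main obstacle is the back-and-forth construction of the measure-preserving homeomorphisms; although this is in the spirit of the classical Glasner--Weiss arguments and of the machinery of \cite{Ibarlucia2016}, it requires careful management of the degenerate cases where the strict inequality needed to apply Glasner--Weiss fails, which is where the full-support hypothesis plays a crucial role.
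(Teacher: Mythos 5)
Your proposal is correct and follows essentially the same route as the paper: the paper also reduces to Theorem~\ref{t:invariant} by taking the group $H$ of all homeomorphisms preserving every measure in $K$, invoking the argument of Proposition~2.6 of Glasner--Weiss for the density of its orbits (the back-and-forth construction you spell out), and then using that approximate divisibility passes from $K_H$ to the subset $K$. The only difference is that the paper cites the Glasner--Weiss argument rather than reproving it.
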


\begin{proof}
We pick $K$ as above, and let $H=\{h \in \Homeo(X) \colon \forall \mu \in K \ h_*\mu=\mu\}$. We first note that the argument used in the proof of Proposition 2.6 in \cite{Glasner1995a} shows that the action of $H$ on $X$ is transitive. Hence it follows from Theorem \ref{t:invariant} that the simplex $K_H$ of all $H$-invariant Borel probability measures on $X$ is approximately divisible. Since $K$ is contained in $K_H$, $K$ is also approximately divisible (we note that it then follows from the arguments of \cite{Ibarlucia2016} that $K=K_H$ but this is not needed here).
\end{proof}

\section{Dynamical simplices as Fra\"iss\'e limits}

In this section, we develop the connection between dynamical simplices and Fra\"iss\'e theory. We introduce the basic concepts of Fra\"iss\'e theory in our specific context. Since we do not assume familarity with this theory, we give some background details; we refer the reader to \cite{Hodges1993a} for a more thorough discussion.

For the moment, we fix a nonempty set $E$ (later, $E$ will be a Choquet simplex, and that additional structure on $E$ will play an important role), and the objects we consider are Boolean algebras endowed with a family of finitely additive probability measures $(\mu_e)_{e \in E}$. 
 We let $\mathcal B_E$ denote the class of all these objects: an element of $\mcB_E$ is of the form $(A,(\mu_e)_{e \in E})$ where $A$ is a Boolean algebra and each $\mu_e$ is a finitely additive probability measure on $A$. Since there should be no risk of confusion, we simply write $A \in \mcB_E$ and use the same letter $A$ also to denote the underlying Boolean algebra of the structure we are considering.

Allow us to point out, for those who are used to Fra\"iss\'e theory, that elements of $\mcB_E$ really are first order structures in disguise, in a language where each $\mu_e$ is coded by unary predicates $\mu_e^r$ which are interpreted via $\mu_e(a)=r \leftrightarrow a \in \mu_e^r$; this is for instance done in \cite{Kechris2007} (with just one measure in the language but this makes no essential difference). We adopt somewhat unusual conventions in the hope of improving readability in our particular case; one needs to pay attention to the fact that our language is uncountable, and so is the class $\mcB_E$.

Note that inside $\mcB_E$ we have natural notions of isomorphism, embedding, and substructure. Below, we call elements of $\mcB_E$ \emph{$E$-structures}.

\begin{defn}
Let $\mcK$ be a subclass of $\mcB_E$ . One says that $\mcK$ satisfies :
\begin{enumerate}
\item the \emph{hereditary property} if whenever $A \in \mcK$ and $B$ embeds in $A$ then also $B \in \mcK$.
\item the \emph{joint embedding property} if for any $A, B \in \mcK$ there exists $C \in \mcK$ such that both $A$ and $B$ embed in $C$.
\item the \emph{amalgamation property} if, for any $A,B,C \in \mcK$ and embeddings $\alpha \colon A \to B$, $\beta \colon A \to C$ there exists $D \in \mcK$ and embeddings $\alpha' \colon B \to D$, $\beta' \colon C \to D$ such that $\alpha' \circ \alpha = \beta' \circ \beta$.
\end{enumerate}
\end{defn}

All the classes we will consider contain the trivial boolean algebra $\{0,1\}$, with $\mu_e(0)=0$ and $\mu_e(1)=1$ for all $e$, which is a substructure of all elements of $\mcB_E$. Thus the joint embedding property will be implied by the amalgamation property. The hereditary property is usually easy to check; the amalgamation property is typically much trickier, and is intimately related to \emph{homogeneity}.

\begin{defn}
Given $A \in \mcB_E$, we say that $A$ is \emph{homogeneous} if any isomorphism between finite substructures of $A$ extends to an isomorphism of $A$. 
\end{defn}
Here we recall that an isomorphism between $E$-structures is an isomorphism of the underlying Boolean algebras which preserves the values of each $\mu_e$.

Homogeneous structures appear often in mathematics: for instance, $\Q$ with its usual ordering is homogeneous, since any order-preserving map between two finite subsets of $\Q$ extends to an order-preserving bijection. The countable atomless Boolean algebra is another good example to have in mind. 

\begin{defn}
Let $A \in \mcB_E$. The \emph{age} of $A$ is the class of all elements of $\mcB_E$ which are isomorphic to a finite substructure of $A$.
\end{defn}

Note that the age of any $E$-structure satisfies the hereditary property and the joint embedding property.

\begin{prop}\label{p:amalgamation}
Assume that $M \in \mcB_E$ is homogeneous. Then its age satisfies the amalgamation property.
\end{prop}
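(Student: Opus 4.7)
The plan is the standard Fra\"iss\'e-theoretic argument, adapted to the fact that our substructures carry finitely additive probability measures indexed by $E$. I would start by unfolding the definitions to set up the diagram inside $M$: since $A, B, C$ belong to the age of $M$, fix embeddings $i_B \colon B \to M$ and $i_C \colon C \to M$. Then $i_B \circ \alpha$ and $i_C \circ \beta$ are two embeddings of $A$ into $M$; in particular $i_B(\alpha(A))$ and $i_C(\beta(A))$ are two finite substructures of $M$, and the map
\[
\varphi = (i_C \circ \beta) \circ (i_B \circ \alpha)^{-1} \colon i_B(\alpha(A)) \to i_C(\beta(A))
\]
is an isomorphism of $E$-structures, because both $i_B \circ \alpha$ and $i_C \circ \beta$ preserve each $\mu_e$ by virtue of being embeddings in $\mcB_E$.

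Next I would invoke homogeneity of $M$: it yields an automorphism $\sigma$ of $M$ extending $\varphi$. Set $\alpha' = \sigma \circ i_B \colon B \to M$ and $\beta' = i_C \colon C \to M$; both are embeddings, and by construction $\alpha' \circ \alpha = \sigma \circ i_B \circ \alpha = i_C \circ \beta = \beta' \circ \beta$, which gives the required commutation. It remains to produce the amalgam $D$ inside the age. For this I would let $D$ be the Boolean subalgebra of $M$ generated by $\alpha'(B) \cup \beta'(C)$, equipped with the restrictions of the measures $\mu_e$ of $M$; these restrictions are again finitely additive probability measures, so $D$ is an $E$-structure and a genuine substructure of $M$.

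The only point that needs a brief justification is that $D$ is \emph{finite}, hence actually lies in the age: this is a standard feature of Boolean algebras, namely that the subalgebra generated by a finite set of elements is finite (its atoms are the nonempty intersections of the generators and their complements). Once this is noted, $\alpha'$ and $\beta'$ corestrict to embeddings $B \to D$ and $C \to D$ with $\alpha' \circ \alpha = \beta' \circ \beta$, which is exactly the amalgamation property.

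I do not expect any serious obstacle here; the main subtlety worth emphasizing is simply that homogeneity must be applied to an isomorphism of $E$-structures (not just of Boolean algebras), which is automatic from the definition of embeddings in $\mcB_E$, and that one must check the amalgam really is finite so that it sits in the age rather than merely in $\mcB_E$.
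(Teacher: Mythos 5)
Your proof is correct and follows essentially the same route as the paper's: use homogeneity to extend the isomorphism between the two copies of $A$ sitting inside (the images of) $B$ and $C$ in $M$ to an automorphism, then take the finitely generated (hence finite) Boolean subalgebra spanned by the two realigned copies as the amalgam. The only cosmetic difference is that you move $B$ and fix $C$ whereas the paper fixes $B$ and moves $C$, and you keep the embeddings $i_B, i_C$ explicit where the paper simply assumes $B, C \subset M$.
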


\begin{proof}
Let $A,B,C$ belong to the age of $M$ and $\alpha \colon A \to B$ and $\beta \colon A \to C$ be embeddings. Given the definition of an age, we may assume that $B,C \subset M$. The isomorphism $\alpha \beta^{-1} \colon \beta(A) \to \alpha (A)$ extends to an automorphism $f$ of $M$ by homogeneity. Let $D$ be the substructure of $M$ generated by $B$ and $f(C)$ (which is finite since a finitely generated Boolean algebra is finite), and $\alpha'$ the inclusion map from $B$ to $D$, $\beta'$ the restriction of $f$ to $C$. Then for all $a \in A$ we have $\beta' \beta (a)= \alpha \beta^{-1} \beta(a) = \alpha(a)= \alpha' \alpha(a)$.
\end{proof}

There exists a form of converse to the previous result, which applies to countable $E$-structures (that is, the underlying Boolean algebra is countable).

\begin{defn}
A subclass $\mcK$ of $\mcB_E$ is a \emph{Fra\"iss\'e class} if :
\begin{itemize}
\item All elements of $\mcK$ are finite.
\item $\mcK$ satisfies the hereditary property, the joint embedding property and the amalgamation property.
\item  $\mcK$ contains countably many structures up to isomorphism (we will sometimes say that $\mcK$ is countable).
\end{itemize}
\end{defn}

We already know that the age of a countable, homogeneous structure is a Fra\"iss\'e class. Note that $\mcB_E$ itself is not a Fra\"iss\'e class since it is not countable. 

To any Fra\"iss\'e class, a unique homogeneous structure is associated.

\begin{theorem}[Fra\"iss\'e]\label{t:Fraisse}
let $\mcK$ be a subclass of $\mcB_E$. If $\mcK$ is Fra\"iss\'e then there exists a unique (up to isomorphism) countable homogeneous $E$-structure whose age is equal to $\mcK$. This structure is called the \emph{Fra\"iss\'e limit} of $\mcK$.
\end{theorem}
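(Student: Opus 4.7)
The plan is to follow the standard proof of Fra\"iss\'e's theorem; the only mildly subtle point compared to the classical setting is that the language encoding an $E$-structure is uncountable, but since $\mcK$ has countably many isomorphism types this causes no real trouble.

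For existence, I would construct $M$ as the union of an increasing chain $M_0 \subseteq M_1 \subseteq \cdots$ of finite structures in $\mcK$ arranged so that $M$ enjoys the following \emph{extension property}: for every $A \subseteq B$ in $\mcK$ and every embedding $\alpha \colon A \to M$, there is an embedding $B \to M$ extending $\alpha$. To arrange this, fix a countable set of representatives of the isomorphism types in $\mcK$ and list all tasks of the form $(A, B, \alpha)$, where $A \subseteq B$ are representatives and $\alpha \colon A \to M_n$ is an embedding into the current stage. There are only countably many such tasks in total, and at stage $n$ I handle the $n$-th one by applying the amalgamation property to $\alpha$ and the inclusion $A \hookrightarrow B$: the resulting amalgam is taken to be $M_{n+1}$, with $M_n \hookrightarrow M_{n+1}$ given by amalgamation. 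A diagonal enumeration ensures every task is eventually treated. The measures $(\mu_e)_{e \in E}$ extend coherently to the union $M = \bigcup_n M_n$. Every finite substructure of $M$ lies in some $M_n \in \mcK$, so by the hereditary property the age of $M$ is contained in $\mcK$; conversely, applying the extension property to $\{0,1\} \hookrightarrow A$ shows every $A \in \mcK$ embeds in $M$, so the age equals $\mcK$.

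Homogeneity and uniqueness both follow from back-and-forth arguments exploiting the extension property. Given an isomorphism $\sigma \colon A \to A'$ between finite substructures of $M$, I would enumerate the elements of $M$ as $x_0, x_1, \ldots$ and build an increasing chain of partial isomorphisms $\sigma = \sigma_0 \subseteq \sigma_1 \subseteq \cdots$ between finite substructures of $M$, alternately adding $x_n$ to the domain and to the range. At each step the extension of $\sigma_n$ is produced by applying the extension property inside $M$, which is legitimate because each finite substructure of $M$ sits in some $M_n \in \mcK$. The union is an automorphism of $M$ extending $\sigma$. For uniqueness, if $M'$ is another countable homogeneous $E$-structure with age $\mcK$, I would first check that $M'$ also satisfies the extension property: given $\alpha \colon A \to M'$ and $A \subseteq B$ in $\mcK$, pick any embedding $\beta \colon B \to M'$ (which exists because $B$ belongs to the age of $M'$), and use homogeneity of $M'$ to move $\beta|_A$ onto $\alpha$, then compose with $\beta$. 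A back-and-forth between $M$ and $M'$ using both extension properties then yields the desired isomorphism.

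The main obstacle is really only bookkeeping. Each $A \in \mcK$ carries uncountably many measure values $\mu_e(a)$ for $e \in E$, so one might worry that the enumeration of tasks at each stage could be uncountable. The crucial assumption that $\mcK$ contains only countably many structures up to isomorphism guarantees that only countably many embedding tasks need ever be handled, which is what makes the construction work; beyond this observation, everything reduces to the usual Fra\"iss\'e argument.
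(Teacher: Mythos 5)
Your proposal is correct and follows essentially the same route as the paper: the paper sketches exactly this back-and-forth argument for uniqueness and homogeneity via the Fra\"iss\'e (extension) property, and explicitly defers the existence construction to Hodges, which is the standard chain-of-amalgamations argument you carry out. Your observation that countability of $\mcK$ up to isomorphism (rather than of the language) is what keeps the bookkeeping countable is precisely the point the paper flags as the only subtlety in this setting.
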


To continue with our examples (which, to be rigorous, should be formulated in the traditional setup of Fra\"iss\'e classes rather than the one we are presenting here), $\Q$ is the Fra\"iss\'e limit of the class of finite linear orders, while the countable atomless Boolean algebra is the limit of the class of finite Boolean algebras.

To prove both the existence and uniqueness of the Fra\"iss\'e limit, one can use the following characterization.

\begin{prop}\label{p: Fraisse}
Let $\mcK$ be a subclass of $\mcB_E$, and assume that $\mcK$ is Fra\"iss\'e. Then a $E$-structure $M$ is isomorphic to the Fra\"iss\'e limit of $\mcK$ iff the age of $M$ coincides with $\mcK$ and $M$ satisfies the \emph{Fra\"iss\'e property}, namely: whenever $A \subseteq M$, $B \in \mcK$ and $\alpha \colon A \to B$ is an embedding, there exists an embedding $\beta \colon B \to M$ such that $\beta \alpha (a)=a$ for all $a \in A$.
\end{prop}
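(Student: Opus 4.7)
The plan is to prove the two implications separately, with the forward direction following directly from homogeneity and the reverse direction being the substantive one, handled by a back-and-forth.

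For the forward implication, assume $M$ is the Fra\"iss\'e limit, hence countable and homogeneous with $\text{age}(M)=\mcK$. Given a finite substructure $A \subseteq M$, an element $B \in \mcK$, and an embedding $\alpha \colon A \to B$, I would first use $B \in \text{age}(M)$ to obtain an embedding $\gamma \colon B \to M$. The composition $\gamma\alpha$ is then an isomorphism between the two finite substructures $A$ and $\gamma\alpha(A)$ of $M$, so by homogeneity it extends to an automorphism $f$ of $M$. The embedding $\beta := f^{-1} \circ \gamma \colon B \to M$ satisfies $\beta\alpha(a) = f^{-1}(f(a)) = a$ for every $a \in A$, as required.

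For the reverse implication, assume $\text{age}(M) = \mcK$ and $M$ satisfies the Fra\"iss\'e property; I would show $M$ is homogeneous, so that by the uniqueness part of Theorem \ref{t:Fraisse}, $M$ is isomorphic to the Fra\"iss\'e limit. Fix an enumeration $M = \{m_k\}_{k \in \mathbb{N}}$ and an isomorphism $\phi \colon A \to A'$ between finite substructures of $M$; I build an increasing chain of isomorphisms $\phi_n \colon A_n \to A'_n$ between finite substructures of $M$, with $\phi_0 = \phi$, whose union is an automorphism of $M$ extending $\phi$. At a \emph{forth} step (forcing a prescribed $m \in M$ into the domain), let $B \subseteq M$ be the finite subalgebra generated by $A_n \cup \{m\}$; then $\phi_n^{-1} \colon A'_n \to B$ is an embedding, so applying the Fra\"iss\'e property to $A'_n \subseteq M$ and $B \in \mcK$ produces an embedding $\beta \colon B \to M$ with $\beta \circ \phi_n^{-1} = \mathrm{id}_{A'_n}$. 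Equivalently, $\beta|_{A_n} = \phi_n$, so setting $\phi_{n+1} := \beta$ properly extends $\phi_n$ and has $m$ in its domain. A symmetric \emph{back} step, applied to $\phi_n^{-1}$, adjoins a prescribed element to the range; alternating the two with respect to the enumeration exhausts $M$ on both sides.

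The same back-and-forth, started from the trivial partial isomorphism between the $\{0,1\}$-subalgebras and run between two countable $E$-structures both having age $\mcK$ and satisfying the Fra\"iss\'e property, proves they are isomorphic — this is the uniqueness assertion in Theorem \ref{t:Fraisse}. The only subtlety is that the maps produced by the Fra\"iss\'e property are \emph{embeddings of $E$-structures}, so they automatically preserve every $\mu_e$; consequently each $\phi_n$ is a genuine $E$-isomorphism onto its image and no separate verification concerning the measures is needed. The main obstacle in such a construction is merely the bookkeeping needed to guarantee that every $m_k$ eventually lies in both the domain and the range of some $\phi_n$, which is handled in the usual way by interleaving forth and back steps according to the enumeration.
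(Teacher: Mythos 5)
Your proposal is correct and follows essentially the same route as the paper: the forward direction is the homogeneity argument the paper attributes to the proof of Proposition \ref{p:amalgamation}, and the converse is the same back-and-forth driven by the Fra\"iss\'e property. The only cosmetic difference is that you first establish homogeneity of $M$ and then invoke uniqueness of the countable homogeneous structure with age $\mcK$, whereas the paper runs the back-and-forth directly between two structures satisfying the Fra\"iss\'e property; both facts are stated in the paper's discussion and rest on the identical argument.
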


Let us give a bit more detail about this proposition and theorem \ref{t:Fraisse}. First, it is easy to see (using the same argument as in the proof of Proposition \ref{p:amalgamation}) that any homogeneous structure must satisfy the Fra\"iss\'e property. Next, a back-and-forth argument proves that two countable structures $M,N$ with the same age which both satisfy the Fra\"iss\'e property are isomorphic. Indeed, fix enumerations $(m_k)$ and $(n_k)$ of $M,N$ respectively. Denote by $A_k$ the (finite) substructure of $M$ generated by $m_0,\ldots,m_k$, and by $B_k$ the substructure of $N$ generated by $n_0,\ldots,n_k$. Using the Fra\"iss\'e property, one can build sequences $(A'_k), (B'_k)$ of finite substructures of $M,N$ respectively such that 
\begin{enumerate}
\item For each $k$, $A'_k \subseteq A'_{k+1}$, $B'_k \subseteq B'_{k+1}$ and $g_{k+1}$ extends $g_k$.
\item For each even $k$, $A_k$ is a subset of $A'_k$.
\item For each odd $k$, $B_k$ is a subset of $B'_k$.
\end{enumerate}
The second and third conditions above impose that $\bigcup_k A'_k=M$, $\bigcup_k B'_k=N$; and the map $\bigcup_k g_k$ is then the desired isomorphism between $M$ and $N$.

From the discussion above, we deduce the uniqueness of a Fra\"iss\'e limit up to isomorphism. Using an argument similar to the one we just discussed, it is also straightforward to prove that a structure which has the Fra\"iss\'e property is homogeneous. 

We have not explained how to construct the Fra\"iss\'e limit; the argument makes crucial use of the countability and amalgamation property of $\mcK$ to build a structure with age equal to $\mcK$ and which has the Fra\"iss\'e property. While not terribly difficult, the proof is a bit technical and we will not give it here. The curious reader may look up \cite{Hodges1993a}*{Theorem~7.1.2} for details; note that what Hodges calls ``weak homogeneity'' corresponds to we call here the Fra\"iss\'e property.

Given a dynamical simplex $K$ on a Cantor space $X$, we can see the clopen algebra of $X$, endowed with all the measures in $K$, as a countable $K$-structure. Crucially for us, this structure is homogeneous; thus, its age is a Fra\"iss\'e class. That is the content of the next lemma, which is itself an easy consequence of \cite{Ibarlucia2016}*{Proposition~2.7}.

\begin{lemma}\label{l:homogeneity}
Let $X$ be a Cantor space, $K$ be a dynamical simplex and $G$ denote the group of all homeomorphisms of $X$ which preserve all the elements of $K$.
Then for any two clopen partitions $(A_i)_{i=1,\ldots,n}$, $(B_i)_{i=1,\ldots,n}$ of $X$ such that $\mu(A_i)=\mu(B_i)$ for all $i \in \{1,\ldots,n\}$ and all $\mu \in K$, there exists $g \in G$ such that $g(A_i)=B_i$ for all $i \in \{1,\ldots,n\}$.
\end{lemma}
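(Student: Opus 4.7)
The plan is to derive this homogeneity statement by induction on $n$, using Proposition 2.7 of \cite{Ibarlucia2016} as a black box. In the form I will invoke it, that proposition says: whenever $C$ and $D$ are clopen subsets of a clopen set $Y \subseteq X$ with $\mu(C) = \mu(D)$ for every $\mu \in K$, there exists $g \in G$ equal to the identity on $X \setminus Y$ with $g(C) = D$. The ``supported on $Y$'' clause is what will make the induction go through.

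The base case $n = 1$ is trivial. For $n = 2$, apply the proposition with $Y = X$, $C = A_1$, $D = B_1$: since $\mu(A_1) = \mu(B_1)$ for every $\mu \in K$, some $g \in G$ maps $A_1$ onto $B_1$, and hence automatically sends $A_2 = X \setminus A_1$ to $B_2 = X \setminus B_1$.

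For the inductive step, I first invoke the $n = 2$ case to produce $g_1 \in G$ with $g_1(A_1) = B_1$. Writing $A_i' := g_1(A_i)$ for $i \ge 2$, I obtain two clopen partitions $(A_i')_{i=2}^n$ and $(B_i)_{i=2}^n$ of the clopen set $Y := X \setminus B_1$, satisfying $\mu(A_i') = \mu(B_i)$ for every $\mu \in K$ and every $i \ge 2$. The inductive hypothesis, run entirely inside $Y$ (every call to Proposition 2.7 is made with ambient clopen set contained in $Y$, so the resulting homeomorphisms fix $B_1$ pointwise), yields $g_2 \in G$ with $g_2$ the identity on $B_1$ and $g_2(A_i') = B_i$ for $i \ge 2$. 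Then $g := g_2 \circ g_1$ belongs to $G$ and satisfies $g(A_i) = B_i$ for every $i \in \{1,\ldots,n\}$.

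The only point requiring care is propagating the ``supported on $Y$'' clause through the induction, so that the homeomorphisms produced at successive stages do not disturb the pieces already matched. This is exactly why I phrase the inductive hypothesis in its relative form. Once that clause is available at each stage, the argument is entirely straightforward, which is what justifies calling the lemma an easy consequence of Proposition 2.7 of \cite{Ibarlucia2016}.
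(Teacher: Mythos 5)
Your argument is correct, but it combines the individual matchings differently from the paper. The paper applies Proposition~2.7 of \cite{Ibarlucia2016} once per pair to get $g_1,\ldots,g_n \in G$ with $g_i(A_i)=B_i$, and then simply \emph{glues}: it defines $g(x)=g_i(x)$ for $x \in A_i$, which is a well-defined homeomorphism because the $A_i$ and the $B_i=g_i(A_i)$ are both partitions of $X$, and it preserves every $\mu \in K$ since $\mu(g(C))=\sum_i \mu(g_i(C\cap A_i))=\mu(C)$. This needs only the bare existence statement of Proposition~2.7, with no support clause. You instead induct on $n$, composing successive corrections, which forces you to carry the ``supported on $Y$'' clause through the induction. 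That clause is available --- the proposition in \cite{Ibarlucia2016} produces an involution that is the identity off $A \cup B$ --- but it is worth noting that even if only the unrelativized statement were at hand, the supported version would follow by exactly the paper's gluing trick (match $C$ to $D$, match $Y\setminus C$ to $Y\setminus D$, take the identity off $Y$), so your route quietly relies on the same mechanism the paper uses directly. The net effect: your proof is longer and hypothesis-hungrier, the paper's is a two-line gluing; both are valid.
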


\begin{proof}
By Proposition~2.7 of \cite{Ibarlucia2016}, there exist elements $g_1,\ldots,g_n$ of $G$ such that $g_i(A_i)=B_i$ for all $i \in \{1,\ldots,n \}$. Then one obtains the desired $g \in G$ by setting $g(x)=g_i(x)$ whenever $x \in A_i$.
\end{proof}

Now we know that, associated to a dynamical simplex $K$, there is a natural Fra\"iss\'e class contained in $\mcB_K$.
We are now concerned with the other direction: building dynamical simplices from Fra\"iss\'e classes of $E$-structures (where $E$ is, as before, some nonempty fixed set). We will use the fact that the clopen algebra $\Clop(X)$ of a Cantor space $X$ is the unique (up to isomorphism) atomless countable Boolean algebra; and that the set of finitely additive probability measures on $\Clop(X)$ is naturally identified with the compact space $P(X)$ of Borel probability measures on $X$.

From now on, we only consider Fra\"iss\'e classes $\mcK$ contained in $\mcB_E$ which satisfy the nontriviality condition that, given any $A \in \mcK$, there is an embedding $\alpha \colon A \to B \in \mcK$ such that for all atoms $a \in A$ $\alpha(a)$ is not an atom of $B$. We call such classes \emph{diffused}.
Given any diffused Fra\"iss\'e class $\mcK$ contained in $\mcB_E$, the underlying Boolean algebra of the limit of $\mcK$ is a countable atomless Boolean algebra, which we see as the algebra of clopen sets of a Cantor space $X_{\mcK}$ by Stone duality; then for any $e \in E$ we see the measure $\mu_e$ as a probability measure on $X_\mcK$. We denote by $S(\mcK)$ the closed convex hull of $\{\mu_e\}_{e \in E}$ inside $P(X_\mcK)$.

\begin{prop}\label{Fraissesimplex}
Let $\mcK$ be a Fra\"iss\'e class of $E$-structures. Then $S(\mcK)$ is a dynamical simplex if and only if the following conditions are satisfied:
\begin{enumerate}
\item\label{one} For any $A \in \mcK$, and any nonzero $a \in A$, $\inf_e \mu_e(a) >0$.
\item\label{two} For any $A \in \mcK$, any $a \in A$ and any $\varepsilon >0$, there exists $B \in \mcK$ and $a_1,\ldots,a_n,a' \in B$ such that
$\bigvee a_i=a'$, $\mu_e(a_i) \le \varepsilon$ for all $i$ and all $e \in E$, and $\mu_e(a')=\mu_e(a)$ for all $e \in E$.
\item\label{three} For any $A,B \in \mcK$ and any $a \in A$, $b \in B$, if $\mu_e(a) \le \mu_e(b) - \varepsilon$ for all $e \in E$ and some $\varepsilon >0$ then there exists $C \in \mcK$ and $a',b' \in C$ such that $a'$ is contained in $b'$ and $\mu_e(a)=\mu_e(a')$, $\mu_e(b)=\mu_e(b')$ for all $e \in E$.
\end{enumerate}
\end{prop}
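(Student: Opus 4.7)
The plan is to prove the two implications separately, using the Fra\"iss\'e property as a bridge between the finite structures in $\mcK$ and their limit $M$, which we identify with $\Clop(X_\mcK)$, and exploiting the fact that any affine continuous inequality satisfied by every $\mu_e$ is automatically satisfied throughout the closed convex hull $S(\mcK)$. In the reverse direction we verify the hypotheses of Corollary~\ref{t:whydivide}, so that approximate divisibility need not be checked directly.

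For the forward direction, assume $S(\mcK)$ is a dynamical simplex. Condition (1) follows from full support and compactness: for any nonempty clopen $U \subseteq X_\mcK$ the continuous map $\mu \mapsto \mu(U)$ attains a positive minimum on $S(\mcK)$, which bounds $\inf_e \mu_e(U)$ from below. Condition (2) follows from atomlessness: a standard compactness-plus-common-refinement argument produces, for any clopen $A_0 \subseteq X_\mcK$ and $\varepsilon > 0$, a clopen partition of $A_0$ into pieces of $\mu$-measure at most $\varepsilon$ uniformly in $\mu \in S(\mcK)$, and the finite substructure of $M$ generated by this partition lies in $\mcK$ (since the age of $M$ is $\mcK$) and realizes the required $B$. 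For condition (3), realize $a$ and $b$ as clopens $A_0, B_0 \subseteq X_\mcK$; the uniform gap $\mu_e(a) \le \mu_e(b) - \varepsilon$ propagates to $\mu(A_0) < \mu(B_0)$ on all of $S(\mcK)$; the Glasner--Weiss condition yields $C_0 \subseteq B_0$ with $\mu(C_0) = \mu(A_0)$, and the substructure of $M$ generated by $B_0$ and $C_0$ (with $b' = B_0$, $a' = C_0$) gives the required $C$.

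For the reverse direction, compactness and convexity of $S(\mcK)$ are immediate, and full support and atomlessness of its elements follow from (1) and (2) respectively by transferring the finite statements into $X_\mcK$ via the Fra\"iss\'e property and then passing from the $\mu_e$'s to the closed convex hull. The main obstacle is verifying the Glasner--Weiss condition. Given clopens $A, B \subseteq X_\mcK$ with $\mu(A) < \mu(B)$ for all $\mu \in S(\mcK)$, compactness of $S(\mcK)$ upgrades this to a uniform gap $\mu_e(a_A) \le \mu_e(a_B) - \varepsilon$, where $a_A, a_B$ denote the elements of $M$ corresponding to $A, B$. Condition (3) provides some $C \in \mcK$ with $a' \subseteq b'$ in $C$ having matching measures, but the delicate step is to transport $a' \subseteq b'$ back into $M$ so that $b'$ is identified with the specific element $a_B$, not merely with some element of the same measure. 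Since $\mu_e(b') = \mu_e(a_B)$ for all $e$, the map $a_B \mapsto b'$ is a measure-preserving isomorphism of the four-element $E$-structures $\langle a_B \rangle \subseteq M$ and $\langle b' \rangle \subseteq C$; applying the Fra\"iss\'e property to this embedding $\langle a_B \rangle \to C$ extends it to an embedding $\beta \colon C \to M$ sending $b' \mapsto a_B$, and $\beta(a')$ then corresponds to a clopen $C_0 \subseteq B$ with $\mu_e(C_0) = \mu_e(a_A)$ for all $e$, hence $\mu(C_0) = \mu(A)$ throughout $S(\mcK)$.
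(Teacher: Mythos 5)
Your proof is correct and follows essentially the same route as the paper: it reduces the backward direction to checking compactness, convexity, full support, atomlessness and the Glasner--Weiss condition (with approximate divisibility handled by Corollary~\ref{t:whydivide}), and it anchors the element $b'$ produced by condition~\eqref{three} at the given clopen set via the Fra\"iss\'e property, where the paper invokes homogeneity --- an equivalent device. The only difference is that you spell out the forward direction and the compactness argument giving the uniform gap, which the paper delegates to a citation.
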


Implicit in the above Proposition is that, under its assumptions, $\mcK$ is automatically diffused. Indeed, pick $A \in \mcK$, and let $a$ be an atom of $A$. Find $B \in \mcK$ as in \eqref{two} for some $\varepsilon$ which is both $>0$ and strictly smaller than $\inf_e \mu_e(a)$. The $E$-structures $\{0,1,a,a^\complement\}$ and $\{0,1,a',a'^\complement\}$ are isomorphic; since $\mcK$ is Fra\"iss\'e we may amalgamate $A$ and $B$ over this common subalgebra to produce a new element $C_a \in \mcK$. The image of $a$ in $C_a$ under the associated embedding is not an atom (since $a'$ is not an atom in $B$).  Then we may inductively amalgamate the algebras $C_a$ as $a$ ranges over the atoms of $A$ to produce an element of $\mcK$ witnessing that $\mcK$ is diffused.

\begin{proof}[Proof of Proposition \ref{Fraissesimplex}]
The fact that \eqref{one} and \eqref{two} are necessary is well-known and easily checked (see for instance Proposition~2.5 of \cite{Ibarlucia2016}); \eqref{three} directly follows from the Glasner--Weiss property. 

Conversely, assume that all three conditions above are satisfied. To see that $S(\mcK)$ satisfies the Glasner--Weiss property, let $U,V$ be clopen subsets of the underlying Cantor space such that $\mu(U)< \mu(V)$ for all $\mu \in S(\mcK)$. Then $A=\{0,1,U,U^\complement\}$ and $B=\{0,1,V,V^\complement\}$, $a=U$, $b=V$ satisfy the conditions of \eqref{three}. Pick a $E$-structure $C$ witnessing that \eqref{three} holds; $C$ is in the age of $S(\mcK)$, so it is realized by some clopen partition. Using homogeneity we may assume that $b'=b$. Then the image of $a$ under the associated embedding is a clopen subset $W$ of $V$ such that $\mu(W)=\mu(U)$ for all $\mu \in S(\mcK)$.

Now fix $\nu \in S(\mcK)$. To see that $\nu$ has full support, pick some nonempty clopen $U$ in $X_\mcK$; then by \eqref{one} there exists $\varepsilon >0$ such that $\mu_e(U) \ge \varepsilon$ for all $e \in E$, hence also $\nu(U) \ge \varepsilon$. Finally, the combination of \eqref{two} and \eqref{three} (and homogeneity) implies that, given any clopen $U \subset X_\mcK$ and any $\varepsilon >0$ there exists a covering of $U$ by clopens $U_1,\ldots,U_n$ such that $\mu_e(U_i) \le \varepsilon$ for all $i$ and all $e \in E$, hence also $\nu(U_i) \le \varepsilon$ for all $i$, and this in turn implies that $\nu$ is atomless.
\end{proof}

\section{A Fra\"iss\'e theoretic proof of Downarowicz's theorem}
Given a Choquet simplex $Q$, we denote by $A(Q)$ the space of all real-valued continuous, convex, affine maps on $Q$. For $F \subset A(Q)$ we denote by $F^+$ the elements of $F$ taking positive values, and by $F_1$ the elements taking values in $[-1,1]$ (that is, the unit ball of $F$ for the sup-norm). Combining these notations, we denote by $F^+_{1}$ the elements of $F$ with values in the half-open interval $]0,1]$.

\begin{defn}
We say that a subset $F$ of $A(Q)$ satisfies the \emph{finite sum property} if, whenever $f,f_1,\ldots,f_n, g_1,\ldots,g_m$ are elements of $F^+$ such that $f= \sum_{i=1}^n f_i= \sum_{j=1}^m g_j$ there exist $h_{i,j} \in F^+$ satisfying
$$\forall j \in \{1,\ldots,n\} \ \sum_{k=1}^m h_{j,k} = f_j \ \text{ and } \forall k \in \{1,\ldots,m\} \  \sum_{j=1}^n h_{j,k}= g_k\ .$$ 
\end{defn}

It is an important fact that, when $Q$ is a Choquet simplex, $A(Q)$ satisfies the finite sum property, which is an equivalent formulation of the Riesz decomposition property of $A(Q)$ (see \cite{Lindenstrauss1964} or \cite{Edwards1965} and Theorem 2.5.4, Chapter I.2 of \cite{Lussky1981}; \cite{Phelps2001} and \cite{Lussky1981} are good references for the facts we use here about Choquet simplices).

We need a version of the finite sum property for maps with values in $]0,1]$, which is immediate and which we just state for the record.

\begin{lemma}\label{l:amalgamation}
Let $Q$ be a Choquet simplex. Assume that $F \subseteq A(Q)$ is a $\Q$-vector subspace which contains the constant function $1$ and has the finite sum property. Assume also that $f$, $(f_j)_{i =1,\ldots,n}$, $(g_k)_{k =1,\ldots,m}$ are elements of $F^+_{1}$ such that
$$ \sum_{j=1}^n f_j = f = \sum_{k=1}^m g_k \ .$$
Then there exist elements $h_{j,k}$ of $F^+_{1}$ such that
$$\forall j \in \{1,\ldots,n\} \ \sum_{k=1}^m h_{j,k} = f_j \ \text{ and } \forall k \in \{1,\ldots,m\} \  \sum_{j=1}^n h_{j,k}= g_k\ .$$
\end{lemma}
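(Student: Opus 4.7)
The plan is to apply the finite sum property to the given data and then verify that the resulting elements automatically lie in $F^+_{1}$. Since $F^+_{1} \subseteq F^+$, the functions $f$, $(f_j)_{j=1,\ldots,n}$ and $(g_k)_{k=1,\ldots,m}$ all belong to $F^+$, and by hypothesis they satisfy $\sum_{j=1}^n f_j = f = \sum_{k=1}^m g_k$. The finite sum property, which $F$ is assumed to enjoy, then produces elements $h_{j,k} \in F^+$ satisfying $\sum_{k=1}^m h_{j,k} = f_j$ and $\sum_{j=1}^n h_{j,k} = g_k$ for every $j$ and $k$.

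It remains only to verify that each $h_{j,k}$ takes values in $]0,1]$. Strict positivity on $Q$ is immediate from $h_{j,k} \in F^+$. For the upper bound, fix $q \in Q$; since all of the $h_{j,k'}(q)$ with $k' \neq k$ are positive, we have
\[
h_{j,k}(q) \;\le\; \sum_{k'=1}^m h_{j,k'}(q) \;=\; f_j(q) \;\le\; 1,
\]
where the last inequality uses $f_j \in F^+_{1}$. Hence $h_{j,k} \in F^+_{1}$ as required. There is essentially no obstacle: the only content beyond the finite sum property itself is the observation that the upper bound $1$ on the $h_{j,k}$ is forced, for free, by the corresponding bound on the $f_j$ (or equivalently on the $g_k$), precisely because the $h_{j,k}$ are made positive by the finite sum property and summed to a function bounded by $1$.
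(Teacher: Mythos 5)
Your proof is correct and is exactly the argument the paper has in mind: the paper omits the proof, calling the lemma ``immediate,'' and your observation that positivity of the $h_{j,k}$ together with $\sum_{k'} h_{j,k'} = f_j \le 1$ forces the upper bound is precisely the intended one-line verification.
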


We now introduce the subclass $\mcK_Q$ of all finite $A \in \mcB_Q$ such that for all nonzero $a \in A$ the map $q \mapsto \mu_q(a)$ belongs to $A(Q)^+_{1}$. We should note again that this class is not countable, as there are too many possibilities for the values $\mu_q(a)$; later we will produce Fra\"iss\'e classes inside $\mcK_Q$. That is made possible by the following fact, which is key in our construction.

\begin{prop}\label{p:amalgamation2}
For any Choquet simplex $Q$, the class $\mcK_Q$ satisfies the amalgamation property.
\end{prop}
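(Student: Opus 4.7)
The plan is to reduce amalgamation of $A,B,C$ along embeddings $\alpha \colon A \to B$ and $\beta \colon A \to C$ to a finite family of applications of Lemma \ref{l:amalgamation}, one per atom of $A$. Enumerate the atoms of $A$ as $a_1,\ldots,a_r$; for each $i$ write $\alpha(a_i) = \bigvee_{j \in J_i} b_{i,j}$ as the join of the atoms of $B$ which lie beneath it, and similarly $\beta(a_i)=\bigvee_{k \in K_i} c_{i,k}$ in $C$. Set $f_i(q)=\mu_q(a_i)$, $f_{i,j}(q)=\mu_q(b_{i,j})$ and $g_{i,k}(q)=\mu_q(c_{i,k})$. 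All of these functions lie in $A(Q)^+_1$ because $A,B,C \in \mcK_Q$, and the fact that $\alpha$ and $\beta$ preserve every $\mu_q$ yields, for each $i$, the identity $\sum_{j \in J_i} f_{i,j} = f_i = \sum_{k \in K_i} g_{i,k}$.

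Next, apply Lemma \ref{l:amalgamation} separately for each $i$ with $F=A(Q)$, which is a $\Q$-vector subspace of itself containing the constant function $1$ and enjoys the finite sum property since $Q$ is a Choquet simplex (as recalled just before the lemma). This produces elements $h_{i,j,k} \in A(Q)^+_1$ with $\sum_{k \in K_i} h_{i,j,k}=f_{i,j}$ and $\sum_{j \in J_i} h_{i,j,k}=g_{i,k}$. Let $D$ be the finite Boolean algebra whose atoms are labelled $d_{i,j,k}$ for $i \le r$, $j \in J_i$, $k \in K_i$, and define each $\mu_q$ on $D$ by $\mu_q(d_{i,j,k})=h_{i,j,k}(q)$ extended additively. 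The total mass telescopes as $\sum_{i,j,k} h_{i,j,k}(q)=\sum_i f_i(q)=1$, so each $\mu_q$ is a probability measure on $D$, and the strict positivity of every $h_{i,j,k}$ guarantees $\mu_q(a)>0$ for every nonzero $a \in D$, so that $D \in \mcK_Q$. Finally, set $\alpha'(b_{i,j})=\bigvee_{k \in K_i} d_{i,j,k}$ and $\beta'(c_{i,k})=\bigvee_{j \in J_i} d_{i,j,k}$ on atoms of $B$ and $C$ respectively; these extend to Boolean algebra embeddings which preserve each $\mu_q$ by construction of the $h_{i,j,k}$, and both $\alpha' \circ \alpha$ and $\beta' \circ \beta$ send $a_i$ to $\bigvee_{j \in J_i,\, k \in K_i} d_{i,j,k}$, so the required diagram commutes.

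The essential obstacle, entirely absorbed into Lemma \ref{l:amalgamation}, is the simultaneous strict positivity of the $h_{i,j,k}$ at \emph{every} point $q \in Q$. A purely combinatorial refinement of the two partitions over each $a_i$ (using products of the ratios $f_{i,j}/f_i$ and $g_{i,k}/f_i$) would only produce nonnegative measures, which could drop the amalgam out of $\mcK_Q$; it is precisely the Riesz decomposition property of $A(Q)$ -- equivalent to the Choquet-simplex hypothesis on $Q$ -- that allows strict positivity to be preserved throughout the decomposition.
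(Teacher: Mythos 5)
Your proof is correct and follows essentially the same route as the paper: the atoms $d_{i,j,k}$ are exactly the paper's $b_j\otimes c_k$ for $(j,k)\in J_i\times K_i$, and in both cases the measures on the amalgam are produced by applying Lemma \ref{l:amalgamation} (the $]0,1]$-valued finite sum property of $A(Q)$) separately over each atom of $A$. Your additional explicit checks (total mass, strict positivity, commutativity of the diagram) are all accurate and match what the paper leaves implicit.
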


\begin{proof}
Let $A, B, C \in \mcK_Q$ and $\alpha \colon A \to B$, $\beta \colon A \to C$ be embeddings. We list the atoms of $A, B,C$ as $(a_i)_{i \in I}$, $(b_j)_{j \in J}$ and $(c_k)_{k \in K}$. For all $i \in I$ we let $J_i$ (resp. $K_i$) denote the set of all $j$ such that $b_j \in \alpha(a_i)$ (resp. $c_k \in \beta(a_i)$). We first define the underlying Boolean algebra of our amalgam, using the usual amalgamation procedure for Boolean algebras (as is done for instance in \cite{Kechris2007}): the atoms of $D$ are of the form $b_j \otimes c_k$ for each pair $(j,k)$ which belongs to $J_i \times K_i$ for some $i$.  For all $i$ and all $(j,k) \in J_i \times K_i$ we will set
$$\alpha'(b_j)= \bigvee_{k \in K_i} b_j \otimes c_k \text{ and } \beta'(c_k)= \bigvee_{j \in J_i} b_j \otimes c_k \ .$$ 
The one thing that requires some care is to define each $\mu_q(b_i \otimes c_j)$; and the constraint we have to satisfy is that, given $i, (j,k) \in J_i \times K_i$  and $q \in Q$ we must have
$$\mu_q(b_j) =\sum_{k \in K_i}  \mu_q(b_i \otimes c_j) \text{ and } \mu_q(c_k) =\sum_{j \in J_i} \mu_q(b_i \otimes c_j) $$
The fact that this is possible is guaranteed by the finite sum property and the fact that 
$$ \forall i \ \sum_{j \in J_i} \mu_q(b_j) = \mu_q(a_i)= \sum_{k \in K_i} \mu_q(c_k) \ . $$
Indeed, since each map $q \mapsto \mu_q(b_j), q \mapsto \mu_q(c_k)$ and $q \mapsto \mu_q(a_i)$ belongs to $A(Q)^+_{1}$, Lemma \ref{l:amalgamation} allows us to find elements $h_{j,k}$ of $A(Q)^+_{1}$ such that 
$$\forall q \in Q \ \forall j \in J_i \sum_{k \in K_i} h_{j,k}(q)= \mu_q(b_j) \text{ and } \forall k \in K_i \sum_{j \in J_i} h_{j,k}(q)= \mu_q(c_k)\ . $$
Setting $\mu_q(b_j \otimes c_k)= h_{j,k}(q)$, we are done.
\end{proof}

The next lemma is really just a simple version of the Lowenheim--Skolem theorem; at the request of the referee we give a proof.

\begin{lemma}\label{l:Lowenheim}
Let $Q$ be a Choquet simplex, and $F$ be a countable $\Q$-vector subspace of $A(Q)$. Then $F$ is countained in a countable $\Q$-vector subspace which satisfies the finite sum property.
\end{lemma}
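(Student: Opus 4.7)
The plan is the standard Lowenheim--Skolem closure argument: inductively enlarge $F$ by adding, at each stage, a countable family of witnesses to the finite sum property drawn from $A(Q)$, and then close under the $\Q$-vector space operations. Since $A(Q)$ itself satisfies the finite sum property (this is the Riesz decomposition property for Choquet simplices, recalled just before Lemma \ref{l:amalgamation}), the required witnesses always exist in $A(Q)$; the issue is only to keep things countable.

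More precisely, I would set $F_0 = F$ and, assuming $F_n$ has been defined as a countable $\Q$-vector subspace of $A(Q)$, enumerate all finite tuples $(f, f_1, \ldots, f_p, g_1, \ldots, g_q)$ of elements of $F_n^+$ such that $f = \sum_{j=1}^p f_j = \sum_{k=1}^q g_k$; there are only countably many such tuples since $F_n$ is countable. For each such tuple, apply the finite sum property of $A(Q)$ to produce elements $h_{j,k} \in A(Q)^+$ witnessing the decomposition. Let $F_{n+1}$ be the $\Q$-linear span of $F_n$ together with all these witnesses; this is still a countable $\Q$-vector subspace of $A(Q)$.

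Finally, set $F' = \bigcup_{n \in \omega} F_n$. It is clearly a countable $\Q$-vector subspace of $A(Q)$ containing $F$. To check the finite sum property, consider any tuple $(f, f_1, \ldots, f_p, g_1, \ldots, g_q)$ in $(F')^+$ with $f = \sum_j f_j = \sum_k g_k$. Since the tuple is finite, there is some $n$ with $f, f_1, \ldots, f_p, g_1, \ldots, g_q \in F_n$, so witnesses $h_{j,k}$ were added at stage $n+1$ and thus lie in $F_{n+1} \subseteq F'$.

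There is no real obstacle here; the only point deserving attention is the compatibility between the ``positive part'' used in the statement of the finite sum property and the $\Q$-vector space closure, which is fine because we are only asserting the existence of a positive decomposition, not that every element is positive. So the argument goes through as a routine iterated-closure construction.
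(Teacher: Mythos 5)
Your argument is correct and is essentially identical to the paper's proof: both build an increasing chain $F_0 \subseteq F_1 \subseteq \cdots$ of countable $\Q$-vector subspaces, at each stage adjoining witnesses (taken from $A(Q)$, which has the finite sum property by the Riesz decomposition property) for the countably many tuples arising at the previous stage, and conclude by taking the union. Nothing further is needed.
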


\begin{proof}
We build an increasing sequence $E_n$ of countable $\Q$-vector subspaces of $A(Q)$ such that $E_0=F$ and, for any $k$, whenever $f,f_1,\ldots,f_n, g_1,\ldots,g_m$ are elements of $E_k^+$ such that $f= \sum_{i=1}^n f_i= \sum_{j=1}^m g_j$ there exist $h_{i,j} \in E_{k+1}^+$ satisfying
$$\forall j \in \{1,\ldots,n\} \ \sum_{k=1}^m h_{j,k} = f_j \ \text{ and } \forall k \in \{1,\ldots,m\} \  \sum_{j=1}^n h_{j,k}= g_k\ .$$ 
Then $\bigcup E_i$ will have the desired property. To see why the construction is possible, assume that $E_k$ has been built. 
The set $Z$ of all $(f_1,\ldots,f_n, g_1,\ldots,g_m)$ in $E_k^+$ such that $\sum_{i=1}^n f_i= \sum_{j=1}^m g_j$ is countable and, 
for any element $z=(f_1,\ldots,f_n, g_1,\ldots,g_m)$ of $Z$, we may find maps $h^z_{i,j} \in A(Q)^+_1$ such that
$$\forall j \in \{1,\ldots,n\} \ \sum_{k=1}^m h^z_{j,k} = f_j \ \text{ and } \forall k \in \{1,\ldots,m\} \  \sum_{j=1}^n h^z_{j,k}= g_j\ .$$
Then, let $E_{k+1}$ denote the $\Q$-vector subspace generated by $E_k$ and the (countable) set of all $h^z_{i,j}$ as $z$ ranges over $Z$.
\end{proof}

\begin{theorem}[Downarowicz \cite{Downarowicz1991}]\label{t:downarowicz}
Given a nonempty metrizable Choquet simplex $Q$, there exists a minimal homeomorphism of a Cantor space whose set of invariant measures is affinely homeomorphic to $Q$.
\end{theorem}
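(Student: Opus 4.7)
My plan is to realize $Q$ as $S(\mcK)$ for a carefully chosen countable Fra\"iss\'e subclass $\mcK$ of $\mcK_Q$, and then invoke Proposition~\ref{Fraissesimplex} to extract the minimal homeomorphism. The main obstacle is that $\mcK_Q$ itself is too large to be a Fra\"iss\'e class: Proposition~\ref{p:amalgamation2} gives amalgamation, but there are uncountably many possible measure values, so I need to trim down to a countable subclass that still amalgamates \emph{inside itself} and remains rich enough to recover all of $Q$.

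Since $Q$ is metrizable, $A(Q)$ is separable, so I can pick a countable dense $\Q$-vector subspace $F_0 \subseteq A(Q)$ containing the constant function $1$. By Lemma~\ref{l:Lowenheim} I enlarge $F_0$ to a countable $\Q$-vector subspace $F \subseteq A(Q)$ which still contains $1$ and is dense, and additionally satisfies the finite sum property. I then let $\mcK$ be the class of all finite $A \in \mcK_Q$ such that $q \mapsto \mu_q(a)$ lies in $F$ for every $a \in A$. This class is countable up to isomorphism, is plainly hereditary, and contains the trivial algebra; amalgamation is inherited from $\mcK_Q$ because the proof of Proposition~\ref{p:amalgamation2} only uses Lemma~\ref{l:amalgamation} applied within a space enjoying the finite sum property, and running that argument inside $F$ produces witnesses $h_{j,k}$ that remain in $F$.

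Next I verify the three conditions of Proposition~\ref{Fraissesimplex}. For \eqref{one}, compactness of $Q$ combined with continuity and strict positivity of $q \mapsto \mu_q(a)$ for nonzero $a$ yields the required positive infimum. For \eqref{two}, given $a \in A \in \mcK$ and $\varepsilon > 0$, I pick $n > 1/\varepsilon$ and construct $B \in \mcK$ with $n$ atoms of measures $\mu_q(a)/n$ (plus possibly an atom of measure $1-\mu_q(a)$); the boundary case where $\mu_q(a)=1$ somewhere forces $a = 1_A$ because $A \in \mcK_Q$, so no extra atom is needed and nothing goes wrong. Condition \eqref{three} is handled similarly by hand, building a $C$ with atoms $a'$, $b' \setminus a'$, $1 - b'$ of measures $\mu_q(a)$, $\mu_q(b) - \mu_q(a)$, $1-\mu_q(b)$, all in $F$ and each strictly positive once the boundary case $b = 1_B$ is treated separately.

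It remains to identify $S(\mcK)$ with $Q$. The map $q \mapsto \mu_q \colon Q \to P(X_\mcK)$ is continuous and affine by construction; it is injective because density of $F$ in $A(Q)$ yields, for distinct $q \neq q'$, some $f \in F$ separating them, which after rational translation and rescaling may be assumed to take values in $(0,1)$, so the two-atom algebra with measure $f$ lies in $\mcK$ and hence in the age of the Fra\"iss\'e limit, producing a clopen $U \subseteq X_\mcK$ with $\mu_q(U) = f(q) \neq f(q') = \mu_{q'}(U)$. Being a continuous injective affine map out of compact $Q$, the map is an affine homeomorphism onto its image, which is closed and convex and therefore equals $S(\mcK)$. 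Combined with Proposition~\ref{Fraissesimplex}, this produces the desired minimal homeomorphism of $X_\mcK$ whose simplex of invariant measures is affinely homeomorphic to $Q$.
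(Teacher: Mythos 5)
Your proposal is correct and follows essentially the same route as the paper: enlarge a countable dense $\Q$-subspace of $A(Q)$ via Lemma~\ref{l:Lowenheim} to one with the finite sum property, form the countable Fra\"iss\'e class of finite $E$-structures whose measure functions lie in $F^+_1$, verify the three conditions of Proposition~\ref{Fraissesimplex}, and identify $Q$ with $S(\mcK)$ via $q \mapsto \mu_q$ using density of $F$ for injectivity. Your explicit handling of the boundary cases in conditions \eqref{two} and \eqref{three} is a harmless elaboration of what the paper leaves implicit.
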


Note that Downarowicz obtains a more precise result: the minimal homeomorphism that he obtains is a dyadic Toeplitz flow, while here we do not have control over its dynamics. 

\begin{proof}
We need to build a dynamical simplex which is affinely homeomorphic to $Q$. Since $Q$ is metrizable, $A(Q)$ is separable, thus contains a countable dense $\Q$-vector subspace. Then we can apply Lemma \ref{l:Lowenheim} to produce a countable dense $\Q$-vector subspace $F$ of $A(Q)$ which contains the constant function $1$ and satisfies the finite sum property.

Consider the class $\mcL$ consisting of all $A \in \mcK_Q$ such that for all nonzero $a \in A$ the map $q \mapsto \mu_q(a)$ belongs to $F^+_{1}$. By the same argument as above, the finite sum property of $F$ ensures that $\mcL$ has the amalgamation property; clearly $\mcL$ also satisfies the hereditary property, and has only countably many elements up to isomorphism because $F$ is countable, hence $\mcL$ is a Fra\"iss\'e class. 

The fact that $\mcL$ satisfies condition \eqref{one} of Proposition \ref{Fraissesimplex} follows from the fact that elements of $A(Q)^+_1$ are continuous and take values in $]0,1]$, hence are bounded below by some strictly positive constant; condition \eqref{two} is easily deduced from the fact that for any $f \in F$ and any integer $N>0$ the function $\frac{1}{N} f$ also belongs to $F$. Finally, condition \eqref{three} is established by noticing that if $f,g \in F^+_1$ and $f(q)< g(q)$ for all $q \in Q$ then $g-f$ also belongs to $F^+_1$.

To sum up, we just built a dynamical simplex $S(\mcL)$. We now claim that the map $\Phi \colon q \mapsto \mu_q$ is a affine homeomorphism from $Q$ to $S(\mcL)$. 

Continuity of $\Phi$ is equivalent to the fact that for each $U \in \Clop(X_\mcL)$ the map $q \mapsto \mu_q(U)$ is continuous, which is built into our definition of $\mcL$ since this map belongs to $A(Q)$. Similarly, $\Phi$ is affine because each map $q \mapsto \mu_q(U)$ is affine.  This implies that $\Phi(Q)=\{\mu_q \colon q \in Q\}$ is compact and convex, so its closed convex hull $S(\mcL)$ is equal to it and $\Phi$ is onto. Finally, injectivity of $\Phi$ is ensured by the density of $F$ in $A(Q)$: if $q,q'$ are distinct elements of $Q$, there exists $f \in F^+_1$ such that $f(q) \ne f(q')$, and both $f(q),f(q')$ are different from $1$. Then by definition of $\mcL$ this yields $U \in \Clop(X_\mcL)$ such that $\mu_q(U)= f(q) \ne f(q')= \mu_{q'}(U)$. Thus $\Phi \colon Q \to S(\mcL)$ is an affine homeomorphism and we are done.
\end{proof}

\section{Speedup equivalence vs orbit equivalence}\label{s:speedups}

\begin{defn}
Two minimal homeomorphisms $\varphi, \psi$ of a Cantor space $X$ are \emph{orbit equivalent} if there is $g \in \Homeo(X)$ such that $g$ maps the $\varphi$-orbit of $x$ onto the $\psi$-orbit of $g(x)$ for all $x \in X$.
\end{defn}

\begin{defn}
For $\varphi \in \Homeo(X)$, we denote by $S(\varphi)$ the simplex of all $\varphi$-invariant Borel probability measures on $X$.
\end{defn}

We recall that for $\mu \in P(X)$ and $g \in \Homeo(X)$ one has $g_* \mu(A)=\mu(g^{-1}A)$ for all clopen $A$; and for $K \subset P(X)$ and $g \in \Homeo(X)$ we set $g_* K= \{g_* \mu \colon \mu \in K\}$.
The following theorem is a cornerstone of the study of orbit equivalence of minimal homeomorphisms of Cantor spaces.

\begin{theorem}[Giordano--Putnam--Skau \cite{Giordano1995}]
Whenever $\varphi$, $\psi$ are two minimal homeomorphisms of a Cantor space $X$, the following conditions are equivalent.
\begin{enumerate}
\item $\varphi$ and $\psi$ are orbit equivalent.
\item There exists $g \in \Homeo(X)$ such that $g_*S(\varphi)=S(\psi)$.
\end{enumerate}
\end{theorem}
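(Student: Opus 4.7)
The implication (1) $\Rightarrow$ (2) is the easy direction. If $g \in \Homeo(X)$ realizes an orbit equivalence between $\varphi$ and $\psi$, then there is a Borel cocycle $n \colon X \to \Z$ with $g \circ \varphi(x) = \psi^{n(x)} \circ g(x)$; a direct computation (or appeal to the fact that orbit equivalent actions share the same $\sigma$-algebra of invariant sets, plus ergodic decomposition) shows that $g_*$ sends $\varphi$-invariant probability measures bijectively onto $\psi$-invariant ones. The content of the statement is thus the reverse implication.

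My plan for (2) $\Rightarrow$ (1) would follow the original Giordano--Putnam--Skau strategy, which factors through Bratteli--Vershik theory and dimension groups. First, I would invoke the Herman--Putnam--Skau representation theorem: any minimal homeomorphism of a Cantor space is conjugate to the Vershik map on a properly ordered simple Bratteli diagram. This replaces each of $\varphi, \psi$ by an essentially combinatorial object. Second, I would attach to each such system its ordered dimension group $K_0(X,\varphi)$ with its distinguished order unit, and identify $S(\varphi)$ affinely with the state space of $K_0(X,\varphi)$; since the system is minimal, the quotient of $K_0(X,\varphi)$ by its infinitesimal subgroup $\mathrm{Inf}(K_0(X,\varphi))$ is a simple dimension group whose state space is still $S(\varphi)$. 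Third, I would use the Effros--Handelman--Shen classification and an Elliott-style intertwining argument to upgrade an affine homeomorphism of state spaces preserving the distinguished unit to an isomorphism of the two quotient dimension groups, and then translate this isomorphism into a telescoping/relabeling of Bratteli diagrams that realizes an orbit equivalence of the corresponding Vershik systems.

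The technical heart of the argument—and the main obstacle—is the fact that orbit equivalence of minimal Cantor systems is characterized by isomorphism of the ordered dimension group modulo infinitesimals; this is the core result of \cite{Giordano1995} and is genuinely nontrivial, requiring the full absorption/telescoping machinery for Bratteli--Vershik systems. An alternative plan, more in the spirit of the present paper, would attempt a direct back-and-forth: build $g$ as an increasing union of partial homeomorphisms that refine finer and finer Kakutani--Rokhlin partitions, invoking Lemma~\ref{l:homogeneity} at each stage to align clopen partitions whose atoms carry matching measures under the elements of the simplices. This approach produces a homeomorphism $g$ with $g_* S(\varphi) = S(\psi)$ directly, but without additional care on how $g$ interacts with the orbit cocycles of $\varphi$ and $\psi$ it only gives a measure-theoretic equivalence; bridging the gap to genuine orbit equivalence still seems to require the dimension group invariant, so I would expect the Bratteli--Vershik route to be unavoidable.
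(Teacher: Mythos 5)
The paper does not prove this theorem: it is quoted verbatim from Giordano--Putnam--Skau \cite{Giordano1995} and used as a black box, so there is no internal proof to compare your proposal against. Judged on its own terms, your outline of the easy direction and your identification of the overall strategy (Bratteli--Vershik representation, dimension groups modulo infinitesimals, the absorption machinery) match the actual architecture of \cite{Giordano1995}, and you are right that the back-and-forth in the spirit of Lemma~\ref{l:homogeneity} only produces a homeomorphism matching the simplices, not an orbit equivalence.

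There is, however, one concrete gap in your step from condition (2) to the dimension-group isomorphism. You propose to ``upgrade an affine homeomorphism of state spaces preserving the distinguished unit to an isomorphism of the two quotient dimension groups'' via Effros--Handelman--Shen and an Elliott intertwining. No such upgrade exists: an affine homeomorphism of state spaces does not determine a simple dimension group modulo infinitesimals. For instance, the $2$-odometer and the $3$-odometer are both uniquely ergodic, so $S(\varphi)$ and $S(\psi)$ are single points and trivially affinely homeomorphic, yet $\Z[1/2]$ and $\Z[1/3]$ are not unitally order isomorphic and the systems are not orbit equivalent. What saves the argument is that hypothesis (2) gives you more than an abstract affine homeomorphism: it gives a homeomorphism $g$ of $X$ itself. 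Since $K_0(X,\varphi)/\mathrm{Inf}$ is concretely realized as the subgroup of $A(S(\varphi))$ generated by the evaluation maps $\mu \mapsto \mu(A)$ for $A$ clopen (with the order and unit inherited from $A(S(\varphi))$), the induced map $A \mapsto g(A)$ on $\Clop(X)$ together with $g_* \colon S(\varphi) \to S(\psi)$ carries one such subgroup onto the other and hence yields the required unital order isomorphism directly, with no classification theorem needed. You should replace the EHS/intertwining step by this observation; the genuinely hard part then remains, as you say, the GPS theorem that isomorphism of $K_0/\mathrm{Inf}$ implies orbit equivalence, which cannot be shortcut by the methods of this paper.
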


Note that any $g$ witnessing that $\varphi$ and $\psi$ are orbit equivalent must be such that $g_*S(\varphi)= S(\psi)$; the converse is false, indeed it is possible that $S(\varphi)=S(\psi)$ but the orbits of $\varphi$ and $\psi$ do not coincide. The first example of (an even stronger instance of) this phenomenon is due to M. Boyle and appears as Appendix A of \cite{Ormes2000}. Another illuminating example is the following: on $X=\{0,1\}^{\N}$, consider the equivalence relation $E_0$ defined by 
$$\forall x,y \in \{0,1\}^{\N} \quad \left(x \ E_0 \ y\right) \Leftrightarrow \left( \exists N \ \forall n \ge N \ x_n=y_n \right)\ . $$
Consider also the dyadic odometer $\varphi$ (which corresponds to adding $1$ with right-carry); the associated equivalence relation $F$ is the same as $E_0$, except that the two classes $0^\infty$ and $1^\infty$ are merged. It follows from work of Giordano--Putnam--Skau that there exists a minimal homeomorphism $\psi$ of $\{0,1\}^\N$ whose associated equivalence relation is $F$ (J. Clemens gave an explicit construction of such a homeomorphism in unpublished work). Then $\varphi$, $\psi$ each only preserve one measure, namely, the $(\frac{1}{2},\frac{1}{2})$-Bernoulli measure on $\{0,1\}^\N$. In particular, $S(\varphi)=S(\psi)$. Yet, they do not have the same orbits, since as we mentioned before one $\varphi$-orbit splits into two $\psi$-orbits, while all the other orbits of $\varphi$ and $\psi$ are the same.

The Giordano--Putnam--Skau  theorem admits a one-sided analogue, due to Ash \cite{Ash2016}, which we proceed to describe.

\begin{defn}
Let $\varphi,\psi$ be minimal homeomorphisms. Say that $\varphi$ is a \emph{speedup} of $\psi$ if $\varphi$ is conjugate to a map of the form $x \mapsto \psi^{n_x}(x)$, with $n_x >0$ for all $x$.
\end{defn}

\begin{theorem}[Ash \cite{Ash2016}]
Whenever $\varphi$, $\psi$ are two minimal homeomorphisms of a Cantor space $X$, the following conditions are equivalent.
\begin{enumerate}
\item $\varphi$ is a speedup of $\psi$.
\item There exists $g \in \Homeo(X)$ such that $S(\psi) \subset g_*S(\varphi)$.
\end{enumerate}
\end{theorem}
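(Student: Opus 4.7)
I would prove the two implications separately: the forward direction is essentially a computation, while the reverse direction is the substantive content and requires constructing a conjugacy.

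For $(1) \Rightarrow (2)$, since being a speedup is invariant under conjugation, I may replace $\varphi$ by a conjugate and assume outright that $\varphi(x)=\psi^{n_x}(x)$ with $n_x > 0$. Continuity of $\varphi$ forces $n$ to be locally constant, so the sets $A_k = \{x : n_x = k\}$ form a clopen partition of $X$, and bijectivity of $\varphi$ ensures that $\{\psi^k(A_k)\}_k$ is also a clopen partition. The key identity is then the direct computation $\mu(\varphi^{-1}B)=\sum_k \mu(A_k\cap \psi^{-k}B)=\sum_k \mu(\psi^k(A_k)\cap B)=\mu(B)$, valid for any $\psi$-invariant measure $\mu$ and any Borel $B$ (the middle equality uses $\psi$-invariance applied term by term). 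Hence every $\psi$-invariant measure is already $\varphi$-invariant, so $S(\psi)\subset S(\varphi)$; pulling back through the initial conjugacy produces the required $g$.

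For $(2) \Rightarrow (1)$, after replacing $\varphi$ by $g\varphi g^{-1}$ I may assume $S(\psi)\subset S(\varphi)$, and my task is to produce a homeomorphism $h$ of $X$ such that $h^{-1}\varphi h$ has the form $x \mapsto \psi^{m_x}(x)$ with $m_x > 0$. My strategy would be to build $h$ by an inductive back-and-forth using Kakutani--Rokhlin partitions for both systems: at stage $n$ I would maintain a clopen K-R partition for $\varphi$ each of whose rungs is labelled by an element of a $\psi$-orbit, in such a way that ``going up one rung in a $\varphi$-tower'' corresponds to ``advancing by some positive number of $\psi$-steps''. At stage $n+1$, one refines the $\varphi$-partition and extends each column upwards; the hypothesis $S(\psi)\subset S(\varphi)$ guarantees that the relevant $\psi$-measures are controlled by the $\varphi$-tower structure, and the Glasner--Weiss condition for $S(\varphi)$ (which holds because $S(\varphi)$ is a dynamical simplex) provides the clopen room needed to extend the labelling compatibly with the previous stage. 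In the limit, the labels assemble into a positive, locally constant function $m$ and a homeomorphism $h$ realising $h^{-1}\varphi h(x)=\psi^{m_x}(x)$.

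The main obstacle will be carrying out this inductive labelling while simultaneously keeping $m_x$ strictly positive, ensuring consistency as the partitions refine, and producing a genuine homeomorphism rather than merely a measure-theoretic isomorphism. This is where the orbit-equivalence machinery of Giordano--Putnam--Skau is needed: specifically, the ability to realise a prescribed measure-preserving matching of clopen sets by an honest homeomorphism preserving every measure in $S(\varphi)$, in the spirit of Proposition~2.7 of \cite{Ibarlucia2016} invoked in Lemma~\ref{l:homogeneity}. The reliance on the one-sided inclusion $S(\psi)\subset S(\varphi)$ rather than equality is exactly what allows the construction to produce a speedup in one direction without simultaneously producing a full orbit equivalence.
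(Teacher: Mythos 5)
The paper does not actually prove this theorem: it is quoted from Ash \cite{Ash2016}, with only the remark that the hard implication is obtained via Kakutani--Rokhlin partitions, so your proposal must be judged on its own. Your argument for $(1)\Rightarrow(2)$ is essentially correct, except that the claim that continuity of $\varphi$ forces $x\mapsto n_x$ to be locally constant is false: the jump function of a speedup is an arbitrary positive function, and the level sets $A_k=\{x:\varphi(x)=\psi^k(x)\}$ are merely closed (and pairwise disjoint, since minimality of $\psi$ on an infinite space rules out periodic points), so they form a countable Borel partition rather than a finite clopen one. Fortunately your computation only needs countable additivity and the fact that the sets $\varphi(A_k)=\psi^k(A_k)$ are pairwise disjoint with union $X$, so the conclusion $S(\psi)\subseteq S(\varphi)$ survives once you drop the word ``clopen''.

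The implication $(2)\Rightarrow(1)$ is the entire substance of the theorem, and what you have written is a plan rather than a proof. Everything that makes the result nontrivial --- how the single hypothesis $S(\psi)\subseteq S(\varphi)$ is converted into a nested sequence of Kakutani--Rokhlin towers in which each $\varphi$-step is realized by a strictly positive number of $\psi$-steps, how the labelling is kept consistent under refinement, and why the limiting map is a homeomorphism conjugating $\varphi$ to a genuine speedup --- is asserted to exist but never exhibited. In particular, the appeal to the Glasner--Weiss condition for $S(\varphi)$ does not do the work as stated: that condition exchanges two clopen sets whose measures are uniformly separated over all of $S(\varphi)$, whereas here one must trade clopen sets between a $\psi$-tower and a $\varphi$-tower using only the one-sided inclusion of simplices, and identifying the right comparison (and proving the needed strict inequalities hold uniformly, e.g.\ by compactness of the simplices) is precisely the content of Ash's argument. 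Similarly, the homogeneity statement of Lemma \ref{l:homogeneity} only realizes a measure-preserving matching of clopen partitions for a single dynamical simplex; it does not by itself intertwine the two dynamics. So the hard direction remains a genuine gap.
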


Again, one of the two implications above is easy; the hard part is building the speedup from the assumption that one set of invariant measures is contained in the other. Ash showed that this can be done using Kakutani--Rokhlin partitions.

Note that, if $\varphi,\psi$ are minimal homeomorphisms such that each $\varphi$-orbit is contained in a $\psi$-orbit, then the set of $\varphi$-invariant measures contains the set of $\psi$-invariant measures; conversely, if the set of $\varphi$-invariant measures contains the set of $\psi$-invariant measures then it follows from Ash's result that one may conjugate $\varphi$ so that each $\varphi$-orbit is contained in a $\psi$-orbit (this is weaker than Ash's theorem cited above and admits a shorter proof). It is natural to wonder, as Ash did \cite{Ash2016}, whether speedup equivalence is actually the same notion as orbit equivalence, which also corresponds to asking whether there is a Schr\"oder-Bernstein property for equivalence relations induced by minimal homeomorphisms. As pointed out by Ash, one easily sees that such is the case when one of the homeomorphisms has a finite-dimensional simplex of invariant measures.

\begin{prop}[Ash \cite{Ash2016}]
Assume that $\varphi,\psi$ are minimal homeomorphisms and that $S(\varphi)$ has finitely many extreme points. If $\varphi$ and $\psi$ are speedup equivalent then they are orbit equivalent.
\end{prop}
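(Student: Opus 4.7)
The plan is to invoke Ash's theorem in both directions, reducing speedup equivalence to the existence of $g, h \in \Homeo(X)$ satisfying $L \subseteq g_*K$ and $K \subseteq h_*L$, where $K = S(\varphi)$ and $L = S(\psi)$; then show that these inclusions must actually be equalities; then apply Giordano--Putnam--Skau.

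Since $K$ is a Choquet simplex with $n$ extreme points it is $(n-1)$-dimensional. Dimension considerations applied to the two inclusions force $\dim L = n-1$ as well (the inclusion $K \subseteq h_*L$ gives $\dim L \ge n-1$ via the affine bijection $h_*$, and $L \subseteq g_*K$ gives $\dim L \le n-1$). Writing $V = \operatorname{aff}(K)$ and $W = \operatorname{aff}(L)$, the affine bijection $g_*$ of the space of measures sends $V$ onto $W$ and $h_*$ sends $W$ onto $V$ (in each case because the image is an $(n-1)$-dimensional affine subspace containing an $(n-1)$-simplex). Hence $T := (hg)_*$ restricts to an affine automorphism of the $(n-1)$-dimensional affine subspace $V$. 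Chaining the inclusions yields $K \subseteq h_*L \subseteq h_*g_*K = TK$.

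The crux is to upgrade $K \subseteq TK$ to $K = TK$, and for this I would use a Lebesgue volume argument. The set $V \cap P(X)$ is compact (since $V$ is a finite-dimensional, hence closed, affine subspace of the ambient locally convex space of measures, and $P(X)$ is compact); consequently the increasing chain $K \subseteq TK \subseteq T^2K \subseteq \dots$ stays inside it, so is uniformly bounded. Fix an identification $V \cong \mathbb{R}^{n-1}$ and let $\operatorname{vol}$ denote the associated Lebesgue measure, so that $\operatorname{vol}(T^jK) = |\det T|^j \operatorname{vol}(K)$ where $\det T$ is the determinant of the linear part of $T$. Uniform boundedness of the iterates inside $V \cap P(X)$ forces $|\det T| \le 1$, while $K \subseteq TK$ yields $\operatorname{vol}(K) \le |\det T|\operatorname{vol}(K)$, hence $|\det T| \ge 1$. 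Thus $\operatorname{vol}(TK) = \operatorname{vol}(K)$, and since $K$ and $TK$ are compact convex subsets of $V$ of full dimension with $K \subseteq TK$ and equal volumes, we conclude $K = TK$.

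With $(hg)_*K = K$, one has $g_*K = h^{-1}_*K$, and rewriting $K \subseteq h_*L$ as $h^{-1}_*K \subseteq L$ gives $g_*K \subseteq L$. Combined with the original $L \subseteq g_*K$ this yields $g_*S(\varphi) = S(\psi)$, so the Giordano--Putnam--Skau theorem delivers the desired orbit equivalence. The main obstacle is to recognize that the finiteness hypothesis unlocks a volume argument: once one observes that the forward $T$-orbit of $K$ is trapped in a compact finite-dimensional convex set, monotonicity of Lebesgue volume simultaneously pins down $|\det T| = 1$ and $K = TK$, and the rest is bookkeeping.
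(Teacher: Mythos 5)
The paper states this proposition without proof, attributing it to Ash \cite{Ash2016} (``one easily sees\dots''), so there is no in-text argument to compare yours against; I can only assess your proof on its own terms, and it is correct. Setting $K=S(\varphi)$, $L=S(\psi)$, Ash's theorem does give $g,h$ with $L\subseteq g_*K$ and $K\subseteq h_*L$, the dimension count correctly forces $\dim L=\dim K=n-1$ and $T=(hg)_*=h_*\circ g_*$ to preserve $V=\operatorname{aff}(K)$, and the chain $K\subseteq TK\subseteq T^2K\subseteq\cdots$ is indeed trapped in the compact set $V\cap P(X)$ because $T$ preserves $P(X)$. The volume argument then works as you say: $|\det T|\le 1$ from boundedness, $|\det T|\ge 1$ from $K\subseteq TK$ and $\operatorname{vol}(K)>0$ (which holds since $K$ has nonempty relative interior in its affine hull --- worth saying explicitly, as you divide by $\operatorname{vol}(K)$), and two nested compact convex sets of full dimension in $V$ with equal positive volume coincide (if $x\in TK\setminus K$, points of $[y,x)$ near $x$ for $y\in\operatorname{int}(TK)$ give an open subset of $TK$ missing the closed set $K$). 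The final bookkeeping $(hg)_*K=K\Rightarrow g_*K=(h^{-1})_*K\subseteq L$ is also right, and Giordano--Putnam--Skau finishes. This is a clean, self-contained justification of a claim the paper leaves to the reference.
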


The situation turns out to be very different for infinite-dimensional simplices.

\begin{defn}
We say that two dynamical simplices $K,L$ on a Cantor space $X$ are \emph{biembeddable} if there exists $g, h  \in \Homeo(X)$ such that $g_*K \subseteq L$ and $h_*L \subseteq K$.
\end{defn}

Thus, biembeddability is the translation of speedup equivalence at the level of dynamical simplices.

\begin{theorem}\label{t:badspeedups}
Assume that $Q,R$ are two nonempty metrizable Choquet simplices such that $Q$ affinely continuously embeds as a face of $R$ and $R$ affinely continuously embeds as a face of $Q$. Then there exist dynamical simplices $S(Q)$, $S(R)$ on a Cantor space $X$ such that $S(Q)$ is affinely homeomorphic to $Q$; $S(R)$ is affinely homeomorphich to $R$; $S(Q)$ and $S(R)$ are biembeddable.
\end{theorem}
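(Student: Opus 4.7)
The plan is to adapt the Fra\"iss\'e construction from the proof of Theorem~\ref{t:downarowicz} so as to realize $Q$ and $R$ simultaneously as two dynamical simplices on the same Cantor space, and then to produce the homeomorphisms $g,h$ by a back-and-forth argument exploiting the face-embedding hypotheses. The face assumption is essential: it guarantees, via Edwards' extension theorem for Choquet simplices, that the restriction maps $\iota^*\colon A(R) \to A(Q)$ and $\jmath^*\colon A(Q) \to A(R)$ are surjective and admit positivity-preserving liftings, which is what will let us transfer measure data along the embeddings.

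First I would construct countable dense $\Q$-vector subspaces $F_Q \subseteq A(Q)$ and $F_R \subseteq A(R)$, each containing the constant $1$ and closed under the finite sum property, which are moreover compatible with the face embeddings in the sense that $\iota^*(F_R) \supseteq F_Q$ and $\jmath^*(F_Q) \supseteq F_R$. I would also close $F_Q, F_R$ under a relative version of the finite sum property, namely: whenever $H \in F_R$ and $g_1,\ldots,g_k \in F_Q$ take values in $]0,1]$ with $\iota^*(H)=g_1+\cdots+g_k$, there exist $f_1,\ldots,f_k \in F_R$ with values in $]0,1]$ satisfying $\iota^*(f_i)=g_i$ and $H=f_1+\cdots+f_k$, and the symmetric statement for $\jmath^*$. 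These closures can be carried out by an iterated L\"owenheim--Skolem argument in the spirit of Lemma~\ref{l:Lowenheim}, since the required relative decompositions exist already in $A(R), A(Q)$ as a consequence of positive face-extension combined with the Riesz decomposition property of Choquet simplices.

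Next, define the Fra\"iss\'e class $\mathcal{L}$ of finite Boolean algebras equipped with families $(\mu_q)_{q \in Q}$ and $(\nu_r)_{r \in R}$ such that for every nonzero $a$ the maps $q \mapsto \mu_q(a)$ and $r \mapsto \nu_r(a)$ lie in $F_Q \cap A(Q)^+_1$ and $F_R \cap A(R)^+_1$ respectively, with morphisms being Boolean embeddings preserving every $\mu_q$ and $\nu_r$. A direct adaptation of the arguments of Theorem~\ref{t:downarowicz} shows $\mathcal{L}$ is Fra\"iss\'e, the amalgamation step applying the finite sum properties of $F_Q$ and $F_R$ independently to the two families of measures; the Fra\"iss\'e limit then yields a Cantor space $X$ on which $S(Q)=\overline{\mathrm{conv}}\{\mu_q\}$ and $S(R)=\overline{\mathrm{conv}}\{\nu_r\}$ are dynamical simplices affinely homeomorphic to $Q$ and $R$, respectively. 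To build $g \in \Homeo(X)$ with $g_*S(Q) \subseteq S(R)$, I construct by a standard Fra\"iss\'e back-and-forth a Boolean automorphism $\sigma$ of $\Clop(X)$ satisfying $\mu_q(U) = \nu_{\iota(q)}(\sigma(U))$ for every clopen $U$ and every $q \in Q$. The back-and-forth is powered by a swap-extension property of $\mathcal{L}$: given $A_0 \subseteq A \in \mathcal{L}$, $B_0 \in \mathcal{L}$, and a Boolean isomorphism $\phi\colon A_0 \to B_0$ with $\mu_q^A(a) = \nu_{\iota(q)}^{B_0}(\phi(a))$ for all $a \in A_0$ and $q \in Q$, one can find $B \in \mathcal{L}$ containing $B_0$ as an $\mathcal{L}$-substructure, together with a Boolean isomorphism $A \to B$ extending $\phi$ and sharing its swap property. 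Concretely $B$ is obtained by splitting each atom $\phi(a_0)$ of $B_0$ according to the partition of $a_0$ by atoms of $A$: the values of $\nu_{\iota(q)}^B$ on the new atoms are forced by the swap relation, the values of $\nu_r^B$ for $r \notin \iota(Q)$ come from the compatible-decomposition closure of $F_R$ built into Step~1, and the values of $\mu_q^B$ are defined using the finite sum property of $F_Q$ subject only to the marginal constraints from $B_0$. A symmetric argument with $\iota$ replaced by $\jmath$ produces $h$.

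The central technical difficulty is precisely the compatible lifting invoked in the swap-extension step: decomposing an element of $F_R \cap A(R)^+_1$ into positive pieces agreeing on $\iota(Q)$ with a prescribed decomposition in $F_Q$. Its existence in $A(R)$ is classical, combining Edwards' face-extension theorem with the finite sum property of $A(R)$; its transfer to the countable set $F_R$ must be arranged by explicit closure during the L\"owenheim--Skolem iteration of Step~1, and it is precisely this step that makes the face hypothesis on $Q \hookrightarrow R$ and $R \hookrightarrow Q$ indispensable to the construction.
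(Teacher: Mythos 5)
Your overall strategy is viable and genuinely differs from the paper's in its bookkeeping. The paper reduces to the case where $R$ is a closed face of $Q$ and $g(Q)$ is a closed face of $R$, builds a \emph{single} family $(\mu_q)_{q\in Q}$ from one countable subspace $F\subseteq A(Q)$, and obtains three \emph{nested} simplices $S(\mcL_{g(Q)})\subseteq S(\mcL_R)\subseteq S(\mcL_Q)$; one of the two biembeddings is then simply the identity, and the other comes for free from the uniqueness of Fra\"iss\'e limits applied to $M_Q$ and the reindexed structure $N_Q=(\Clop(X_\mcL),(\mu_{g(q)})_{q\in Q})$, whose ages coincide by the condition $\{f\circ g: f\in F^+_1\}=F^+_1$. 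You instead carry two unconstrained measure families $(\mu_q)$ and $(\nu_r)$ on the same structures and build \emph{both} homeomorphisms by a hand-rolled ``twisted'' back-and-forth. Your swap-extension property is in substance the same mechanism as the paper's combination of the $(g(Q),Q)$-extension property with age-equality, and your relative finite sum property is exactly the paper's $(R,Q)$-extension property (Lemma \ref{l:preserveinequalities2}, via Edwards), so the face hypothesis is used in the same place. Two points you gloss over but which do work out: (i) Proposition \ref{Fraissesimplex} applied to your combined class only yields that the convex hull of \emph{all} the measures is a dynamical simplex; to get that $S(Q)$ and $S(R)$ are separately dynamical simplices you must check that the $Q$- and $R$-reducts of your limit are themselves homogeneous (equivalently, Fra\"iss\'e limits of the reduct classes) --- this holds precisely because the two families are unconstrained relative to each other, so reduct extensions lift freely; (ii) the realization of the abstractly amalgamated $B$ inside the limit over $B_0$ uses the Fra\"iss\'e property of $M$, which should be said.

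There is one concrete gap: your closure conditions $\iota^*(F_R)\supseteq F_Q$ and $\jmath^*(F_Q)\supseteq F_R$ only power the ``forth'' half of the back-and-forth. In the ``back'' direction, when an atom $\phi(a_0)\in B_0$ is split inside $B$ into atoms with $\nu$-data $H_i\in F_R^+{}_1$, the swap relation forces the new $\mu$-data on the corresponding splitting of $a_0$ to be $q\mapsto H_i(\iota(q))$, and this function must lie in $F_Q^+{}_1$ --- i.e.\ you need $\iota^*(F_R^+{}_1)\subseteq F_Q^+{}_1$ as well. Without both inclusions the iteration only produces a Boolean self-embedding of $\Clop(X)$ (a continuous surjection of $X$), not an automorphism, and the theorem requires genuine homeomorphisms. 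The fix is easy and is exactly what the paper builds in with its condition $\{f\circ g: f\in F^+_1\}=F^+_1$: during the L\"owenheim--Skolem iteration, close so that $\iota^*(F_R^+{}_1)=F_Q^+{}_1$ and $\jmath^*(F_Q^+{}_1)=F_R^+{}_1$ (adding restrictions along $\iota,\jmath$ as well as Edwards-liftings at each stage). With that repair, and the reduct-homogeneity check made explicit, your argument goes through.
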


In particular, given any $Q,R$ as above which are not affinely homeomorphic, and minimal homeomorphisms $\varphi_Q, \varphi_R$ with sets of invariant measures equal to $S(Q)$, $S(R)$ respectively, $\varphi_Q$ and $\varphi_R$ are speedup equivalent but not orbit equivalent. Such examples exist: for instance, let $Q$ be the Poulsen simplex, and $R$ the tensor product of $Q$ and $[0,1]$ (see \cite{Namioka1969}). Then $Q$ embeds as a closed face of $R$, and since any Choquet simplex is affinely homeomorphic to a closed face of the Poulsen simplex this is in particular true for $R$. But $Q$ and $R$ are not homeomorphic since the set of extreme points of $R$ is not dense in $R$.

There is some work to do before we can prove Theorem \ref{t:badspeedups}; we need an additional property of Choquet simplices.

\begin{prop}[Edwards \cite{Edwards1965}]\label{l:preserveinequalities}
Let $Q$ be a Choquet simplex, $R$ be a closed face of $Q$ and $f \in A(R)$. Assume that $f_1,f_2 \in A(Q)$ are such that 
${f_1}_{|R} \le f \le {f_2}_{|R}$. Then there exists $g \in A(Q)$ extending $f$ and such that $f_1 \le g \le f_2$. 
\end{prop}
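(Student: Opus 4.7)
The approach is to derive the proposition from Edwards' sandwich theorem for Choquet simplices, which is itself a classical consequence of the Riesz decomposition property of $A(Q)$ (equivalently, the finite sum property already invoked earlier in the paper).

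First I would reduce to the case $f=0$. By the basic extension theorem (every $f \in A(R)$ extends to some $\tilde f \in A(Q)$, a fact also due to Edwards), write $g=h+\tilde f$ and $k_i = f_i - \tilde f$. The problem becomes: given $k_1, k_2 \in A(Q)$ with ${k_1}_{|R} \le 0 \le {k_2}_{|R}$ (and implicitly $k_1 \le k_2$ on $Q$, since otherwise the conclusion fails), find $h \in A(Q)$ with $h_{|R}=0$ and $k_1 \le h \le k_2$.

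Next, I would set up two envelopes on $Q$:
$$ u(q) = \inf\bigl\{\phi(q) : \phi \in A(Q),\ \phi \ge k_1,\ \phi_{|R} \ge 0\bigr\}, \quad v(q) = \sup\bigl\{\psi(q) : \psi \in A(Q),\ \psi \le k_2,\ \psi_{|R} \le 0\bigr\}. $$
As infimum (resp.\ supremum) of continuous affine functions, $u$ is concave upper semicontinuous (resp.\ $v$ convex lower semicontinuous); and since the families contain $k_2$ and $k_1$ respectively, one has $k_1 \le v$ and $u \le k_2$. The heart of the proof is the pointwise inequality $v \le u$: this is where the simplex hypothesis on $Q$ is used, through the Riesz decomposition property applied to arbitrary members $\phi,\psi$ of the defining families. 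Edwards' sandwich theorem then yields $h_0 \in A(Q)$ with $v \le h_0 \le u$ (so in particular $k_1 \le h_0 \le k_2$); by refining the choice of envelopes, or by an additional iteration using the restriction map $A(Q) \to A(R)$, one can furthermore arrange $u_{|R} = 0 = v_{|R}$, so that $h := h_0$ vanishes on $R$. Setting $g = h + \tilde f$ concludes the proof.

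I expect the principal obstacle to be establishing the comparison $v \le u$: the constraints defining the two families live on the closed face $R$ and through the bounds $k_i$, and a plain Hahn--Banach separation in $Q$ would not suffice. The simplex property of $Q$ enters precisely to allow this pointwise comparison off $R$, and the same ingredient is needed for the auxiliary fact that the restriction $A(Q) \to A(R)$ is surjective.
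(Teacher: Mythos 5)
A preliminary remark: the paper contains no proof of this proposition --- it is quoted from Edwards' 1965 paper (which is precisely the separation theorem you invoke), so the comparison below is with the standard argument in the literature (see e.g.\ Alfsen, \emph{Compact convex sets and boundary integrals}, Theorem~II.5.19). Your sketch has a decisive gap at what you correctly identify as its heart: the comparison $v \le u$ is false for the envelopes as you have defined them. Both defining families contain the sought-after function $h$ itself (it satisfies $h \ge k_1$ and $h_{|R}=0 \ge 0$, as well as $h \le k_2$ and $h_{|R} \le 0$), so by construction $u \le h \le v$ --- the reverse of the inequality you need. Concretely, take $Q=[0,1]$, $R=\{0\}$, $k_1\equiv -1$, $k_2\equiv 1$: then $\phi(t)=-t$ lies in the family defining $u$ and $\psi(t)=t$ lies in the family defining $v$, so $u(1)\le -1 < 1 \le v(1)$. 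A second, independent problem is the semicontinuity pattern: you note that $u$ is concave \emph{upper} semicontinuous and $v$ is convex \emph{lower} semicontinuous, but Edwards' separation theorem requires exactly the opposite (a convex u.s.c.\ minorant below a concave l.s.c.\ majorant); with the pattern reversed the sandwich conclusion fails even on the Bauer simplex $P([0,1])$: take $\mu\mapsto\mu([0,\tfrac12))$ below $\mu\mapsto\mu([0,\tfrac12])$; any continuous affine function in between would come from a continuous $f$ with $\chi_{[0,1/2)}\le f\le\chi_{[0,1/2]}$ pointwise, which is impossible.

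The standard proof needs no envelopes and no reduction to $f=0$. Define $a$ on $Q$ by $a=f$ on $R$ and $a=f_1$ on $Q\setminus R$, and $b$ by $b=f$ on $R$ and $b=f_2$ on $Q\setminus R$. Because $R$ is a face, a proper convex combination landing in $R$ has both of its endpoints in $R$; together with ${f_1}_{|R}\le f$ and the closedness of $R$ this makes $a$ convex and upper semicontinuous, and dually $b$ is concave and lower semicontinuous, with $a\le b$ once one adds the (necessary, and implicit in the paper's application) hypothesis $f_1\le f_2$ on all of $Q$. Edwards' separation theorem, applied with the correct semicontinuities, yields $g\in A(Q)$ with $a\le g\le b$; on $R$ the squeeze $f=a\le g\le b=f$ forces $g_{|R}=f$, and everywhere $f_1\le a\le g\le b\le f_2$. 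Note finally that the extension theorem you use to reduce to $f=0$ is just the special case $f_1\equiv -\|f\|_\infty$, $f_2\equiv\|f\|_\infty$ of the statement being proved, so that reduction is circular in spirit even where it is not logically fatal.
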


\begin{defn}
Assume that $Q$ is a Choquet simplex, $R$ is a closed face of $Q$ and $F$ is a subset of $A(Q)$. 
We say that $F$ has the \emph{$(R,Q)$-extension property} if for any $f, f_1,\ldots,f_n \in F^+_1$  such  that 
$f_{|R} =  \sum_{i=1}^n {f_i}_{|R} $ there exist $g_1,\ldots,g_n \in F^+_1$ such that ${g_i}_{|R}={f_i}_{|R}$ for all $i \in \{1,\ldots,n\}$, and $f=\sum_{i=1}^n g_i$.
\end{defn}

The proof of the following lemma from Proposition \ref{l:preserveinequalities} is straightforward.
\begin{lemma}\label{l:preserveinequalities2}
Assume that $Q$ is a Choquet simplex and $R$ is a closed face of $Q$. Then $A(Q)$ has the $(R,Q)$-extension property.
\end{lemma}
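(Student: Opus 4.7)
The plan is to argue by induction on $n$, peeling off one function at a time. The base case $n=1$ is immediate since then $f_{|R} = {f_1}_{|R}$ and $g_1:=f$ works. For the inductive step, assume the result for $n$ and suppose we are given $f, f_1,\dots,f_{n+1} \in A(Q)^+_1$ with $f_{|R} = \sum_{i=1}^{n+1} {f_i}_{|R}$. I will produce $g_1 \in A(Q)^+_1$ extending ${f_1}_{|R}$ for which $f - g_1$ also lies in $A(Q)^+_1$; then, since $(f-g_1)_{|R} = \sum_{i=2}^{n+1} {f_i}_{|R}$, the induction hypothesis applied to $f - g_1$ together with $f_2, \ldots, f_{n+1}$ yields $g_2, \ldots, g_{n+1}$ completing the decomposition.

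To produce $g_1$, I invoke Proposition \ref{l:preserveinequalities} with affine bounds proportional to $f$. Let $\varepsilon_1 = \min_R {f_1}_{|R}$ and $\varepsilon_2 = \min_R (f - f_1)_{|R}$; both are strictly positive by compactness of $R$, since ${f_1}_{|R}$ is strictly positive continuous, as is $(f - f_1)_{|R} = \sum_{i \geq 2} {f_i}_{|R}$. Take $\phi_1 = \varepsilon_1 f$ and $\phi_2 = (1 - \varepsilon_2) f$ in $A(Q)$. The required inequalities ${\phi_1}_{|R} \leq {f_1}_{|R} \leq {\phi_2}_{|R}$ on $R$ follow from $f_{|R} \leq 1$: indeed $\varepsilon_1 f_{|R} \leq \varepsilon_1 \leq {f_1}_{|R}$, and ${f_1}_{|R} \leq f_{|R} - \varepsilon_2 \leq (1-\varepsilon_2) f_{|R}$. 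Proposition \ref{l:preserveinequalities} then supplies $g_1 \in A(Q)$ extending ${f_1}_{|R}$ with $\varepsilon_1 f \leq g_1 \leq (1-\varepsilon_2) f$ on $Q$. Strict positivity of $f$ on $Q$ then gives $g_1 \geq \varepsilon_1 f > 0$ and $f - g_1 \geq \varepsilon_2 f > 0$, while $g_1 \leq f \leq 1$ and $f - g_1 \leq f \leq 1$, so that both $g_1$ and $f - g_1$ belong to $A(Q)^+_1$, as required.

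The one point that I expect needs to be gotten right — and which is really the heart of the argument — is the choice of \emph{$f$-proportional} bounds rather than constant bounds in the application of Edwards' extension. Using constants $\varepsilon_1$ and $1 - \varepsilon_2$ would fail, because $f$ may take arbitrarily small positive values at points of $Q \setminus R$, at which $f - g_1$ could then become non-positive; scaling the bounds by $f$ itself converts the strict positivity of $f$ on all of $Q$ into strict positivity of both $g_1$ and $f - g_1$. Beyond this one observation, the argument is otherwise routine bookkeeping, in line with the authors' assertion that the deduction from Proposition~\ref{l:preserveinequalities} is straightforward.
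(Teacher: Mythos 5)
Your proof is correct and follows essentially the same route as the paper: reduce to splitting off one summand and apply Edwards' extension theorem (Proposition~\ref{l:preserveinequalities}) to ${f_1}_{|R}$ with affine bounds chosen so that both $g_1$ and $f-g_1$ land in $A(Q)^+_1$. The only (cosmetic) difference is the choice of bounds — you use $\varepsilon_1 f \le g_1 \le (1-\varepsilon_2)f$, while the paper uses $\varepsilon \le g_1 \le f-\varepsilon$, which also works since the upper bound $f-\varepsilon$ (rather than a constant) is what keeps $f-g_1 \ge \varepsilon$.
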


\begin{proof}
By induction, it is enough to prove that the property holds for $n=2$. So, take $f, f_1, f_2 \in F^+_1$ such that $f_{|R}= {f_1}_{|R} + {f_2}_{|R}$. By compactness we have some $\varepsilon >0$ such that $\varepsilon \le {f_1}_{|R} \le {f}_{|R} - \varepsilon$, so we can apply Proposition \ref{l:preserveinequalities} and extend ${f_1}_{|R}$ to $g_1 \in A(Q)$ such that $\varepsilon \le g_1 \le f- \varepsilon$. These inequalities imply both that $g_1 \in A(Q)^+_1$ and that $g_2=f-g_1 \in A(Q)^+_1$.
\end{proof}

We move on towards proving Theorem \ref{t:badspeedups}.
Starting from $Q,R$ as in the statement of the theorem, we first assume that $R$ is a closed face of $Q$ and pick an affine continuous injection $g$ of $Q$ into itself such that $g(Q)$ is a closed face of $R$. 

Note that, given any $f \in A(Q)^+_1$, we may define $h \in A(g(Q))^+_1$ by setting $h(g(q))=f(q)$. Then, using Proposition 
\ref{l:preserveinequalities} we can extend $h$ to $A(Q)^+_1$. We just proved that, for any $f \in A(Q)^+_1$, there exists $h \in A(Q)^+_1$ such that $h \circ g=f$. Granting that, we may apply a closure argument similar to the one used in the proof of Lemma \ref{l:Lowenheim} and find $F \subset A(Q)$ such that:
\begin{itemize}
\item $F$ is a countable dense $\Q$-vector subspace which contains the constant function $1$.
\item $F$ satisfies the finite sum property.
\item $F$ satisfies both the $(R,Q)$-extension property and the $(g(Q),Q)$-extension property.
\item $\{f \circ g \colon f \in F^+_1\}= F^+_1$.
\end{itemize}

We then run the same construction as in the proof of Theorem \ref{t:downarowicz}, considering again the class $\mcL$ consisting of all $A \in \mcK_Q$ such that for all nonzero $a \in A$ the map $q \mapsto \mu_q(a)$ belongs to $F^+_{1}$. We denote by $M_Q$ the Fra\"iss\'e limit of this class, view its underlying Boolean algebra as the clopen algebra of a Cantor space $X_\mcL$ and again denote by $\Phi \colon Q \to S(\mcL)$ the affine homeomorphism $q \mapsto  \mu_q$.

We consider the classes $\mcL_R$  (resp. $\mcL_{g(Q)}$) obtained by taking restrictions of elements of $\mcL$ to $R$ (resp. $g(Q)$); that is, whenever $A_Q=(A,(\mu_q)_{q \in Q})$ is an element of $\mcL$, we obtain an element $A_R=(A,(\mu_r))_{r \in R}$ of $\mcL_R$, and all elements of $\mcL_R$ are obtained in this way (and, of course, similarly for $\mcL_{g(Q)}$).

\begin{lemma}
The classes $\mcL_R$ and $\mcL_{g(Q)}$  have the amalgamation property.
\end{lemma}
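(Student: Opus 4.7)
The plan is to reduce amalgamation in $\mcL_R$ and $\mcL_{g(Q)}$ to amalgamation in $\mcL$ itself, the latter having already been established via the argument of Proposition \ref{p:amalgamation2} together with the finite sum property of $F$. The technical crux is a lifting lemma for embeddings; this is where the $(R,Q)$-extension property of $F$ (respectively the $(g(Q),Q)$-extension property) is essential.

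First I would prove the following lifting statement: given $A_Q \in \mcL$ with restriction $A_R$ to $R$, given $B_R \in \mcL_R$, and given an embedding $\iota \colon A_R \to B_R$, there exists $B_Q \in \mcL$ whose restriction to $R$ equals $B_R$ and such that $\iota$ becomes a measure-preserving embedding $A_Q \to B_Q$. To construct it, pick any lift $B_Q^0 \in \mcL$ of $B_R$, which exists by definition of $\mcL_R$. For each atom $a$ of $A$, list the atoms $b_1, \ldots, b_k$ of $B$ with $\iota(a) = b_1 \vee \cdots \vee b_k$, and set $f_a(q) = \mu_q^{A_Q}(a)$ and $\tilde f_i(q) = \mu_q^{B_Q^0}(b_i)$; all of these belong to $F^+_1$. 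Since $\iota$ preserves the measures indexed by $R$, one has $f_a|_R = \sum_i \tilde f_i|_R$. The $(R,Q)$-extension property applied to $(f_a; \tilde f_1, \ldots, \tilde f_k)$ yields $g_i^a \in F^+_1$ with $g_i^a|_R = \tilde f_i|_R$ and $\sum_i g_i^a = f_a$. Setting $\mu_q(b_i) := g_i^a(q)$ for each atom $b_i$ of $B$, with $a$ the unique atom of $A$ for which $b_i \le \iota(a)$, and extending by finite additivity defines $B_Q$: its restriction to $R$ is $B_R$ by the first condition, $\iota$ is measure-preserving into $B_Q$ by the second, and $B_Q \in \mcL$ since for any nonzero $b \in B$ the function $q \mapsto \mu_q(b)$ is a finite sum of elements of $F$, strictly positive, and bounded above by $\sum_a f_a(q) = \mu_q^{A_Q}(1) = 1$.

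Granting the lifting lemma, amalgamation in $\mcL_R$ follows easily. Starting from $A_R, B_R, C_R \in \mcL_R$ with embeddings $\alpha \colon A_R \to B_R$ and $\beta \colon A_R \to C_R$, fix any lift $A_Q \in \mcL$ of $A_R$ and apply the lifting lemma twice to produce lifts $B_Q, C_Q \in \mcL$ in which $\alpha$ and $\beta$ become $\mcL$-embeddings $A_Q \to B_Q$ and $A_Q \to C_Q$. Amalgamating inside $\mcL$ provides $D_Q \in \mcL$ with embeddings $\alpha' \colon B_Q \to D_Q$, $\beta' \colon C_Q \to D_Q$ satisfying $\alpha' \alpha = \beta' \beta$; restricting $D_Q$ to $R$ gives an element of $\mcL_R$ that amalgamates $B_R, C_R$ over $A_R$ via $\alpha', \beta'$. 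The same argument, with the $(g(Q),Q)$-extension property in place of the $(R,Q)$-extension property, handles $\mcL_{g(Q)}$.

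The main obstacle is the lifting lemma: an arbitrary initial lift $B_Q^0$ of $B_R$ will generally be incompatible with the fixed $A_Q$, and the $(R,Q)$-extension property is precisely what allows one to redistribute the $Q$-values on the atoms above each $\iota(a)$, while leaving their $R$-values untouched, so that they sum to the prescribed $\mu_q^{A_Q}(a)$.
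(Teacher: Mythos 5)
Your proof is correct and uses the same two ingredients in the same order as the paper: the $(R,Q)$-extension property (resp.\ $(g(Q),Q)$-extension property) to lift the relevant decompositions from $R$ back up to $Q$, followed by the finite sum property of $F$ (i.e.\ amalgamation in $\mcL$) over $Q$. Packaging the extension step as a lifting lemma for embeddings and then quoting amalgamation in $\mcL$ as a black box is just a cleaner organization of the paper's argument, not a different route.
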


\begin{proof}
We only write down the argument for $\mcL_R$, the other one is identical.
As in the proof of Proposition \ref{p:amalgamation2}, showing that the amalgamation property holds reduces to proving that, whenever $(A,(\mu_r))_{r \in R}$ belongs to $\mcL_R$, $a \in A$ is a nonzero atom and $f_1,\ldots,f_n,g_1,\ldots,g_m \in F^+_1$ are such that 
$$\forall r \in R \ \mu_r(a)= \sum_{i=1}^n f_i(r)= \sum_{j=1}^m g_j(r) $$
there exist $h_{i,j} \in F^+_1$ such that 
$$ \forall i \ \sum_{j=1}^m {h_{i,j}}_{|R}= {f_i}_{|R} \text{ and } \forall j \ \sum_{i=1}^n {h_{i,j}}_{|R}= {g_j}_{|R} \ .$$
To prove that this is true, first apply the $(R,Q)$-extension property of $F$ to $f,f_1,\ldots f_n$ and $f,g_1,\ldots,g_m$, then use the fact that $F$ has the finite sum property.
\end{proof}

Let $M_R$, $M_{g(Q)}$ denote the structures $(\Clop(X_\mcL), (\mu_r)_{r \in R})$ and $(\Clop(X_\mcL),(\mu_{g(q)})_{q \in Q})$. 

\begin{lemma}
$M_R$ (resp. $M_{g(Q)}$) is the Fra\"iss\'e limit of $\mcL_R$ (resp. $\mcL_{g(Q)}$).
\end{lemma}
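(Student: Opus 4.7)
The plan is to apply Proposition \ref{p: Fraisse}: I need to show that the age of $M_R$ equals $\mcL_R$, and that $M_R$ satisfies the Fra\"iss\'e property with respect to $\mcL_R$. The $\mcL_{g(Q)}$ case is entirely analogous, with the $(g(Q),Q)$-extension property of $F$ replacing the $(R,Q)$-extension property, so I concentrate on $M_R$.

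That the age of $M_R$ equals $\mcL_R$ is essentially bookkeeping. A finite substructure of $M_R$ has the form $(A,(\mu_r)_{r \in R})$ for some finite subalgebra $A$ of $\Clop(X_\mcL)$; viewing $A$ as a substructure of $M_Q$ puts it in $\mcL$, so its restriction to $R$ lies in $\mcL_R$. Conversely, any $B \in \mcL_R$ arises by restriction from some $B_Q \in \mcL$; since $\mcL$ is the age of $M_Q$, $B_Q$ embeds into $M_Q$, and such an embedding, being a Boolean map preserving every $\mu_q$, in particular defines an embedding $B \to M_R$.

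The substantive content is the Fra\"iss\'e property, and the key step is a lifting construction. Given a finite substructure $A \subseteq M_R$ (which is also a finite substructure $A_Q$ of $M_Q$, with measures we are forgetting) and an embedding $\alpha \colon A \to B$ in $\mcL_R$, I want to exhibit $B_Q \in \mcL$ restricting to $B$ and such that $\alpha$ becomes an embedding $A_Q \to B_Q$ in $\mcL$. Once this is done, the Fra\"iss\'e property of $M_Q$ provides an embedding $\beta_Q \colon B_Q \to M_Q$ over $\alpha$, and restricting the codomain to the data indexed by $R$ gives the required $\beta \colon B \to M_R$ with $\beta \circ \alpha = \mathrm{id}$.

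To perform the lift, I work one atom of $A$ at a time. Fix an atom $a$ of $A$; under $\alpha$ it decomposes as $\alpha(a) = \bigvee_{j \in J_a} b_j$, with the $b_j$ atoms of $B$. The function $f \colon q \mapsto \mu_q(a)$ lies in $F^+_1$ (since $A_Q \in \mcL$), and picking any witness $B^0_Q \in \mcL$ to $B \in \mcL_R$ provides $\tilde f_j \in F^+_1$ with $\tilde f_j(r) = \mu_r(b_j)$ for all $r \in R$. Because $\alpha$ is an embedding in $\mcL_R$, one has $f_{|R} = \sum_j {\tilde f_j}_{|R}$. Applying the $(R,Q)$-extension property of $F$ produces $g_j \in F^+_1$ with ${g_j}_{|R} = {\tilde f_j}_{|R}$ and $\sum_j g_j = f$; setting $\mu_q(b_j) := g_j(q)$ for every such atom (and doing this independently over each atom $a$ of $A$) defines $B_Q$. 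One verifies that $B_Q$ lies in $\mcL$ (each atomic measure lies in $F^+_1$, and on any nonzero element of $B$ the sum remains a strictly positive element of $F$ bounded above by $1$), that its restriction to $R$ is $B$, and that $\alpha$ is now measure-preserving for all $q \in Q$. The only non-formal ingredient, and thus the main obstacle, is precisely this invocation of the $(R,Q)$-extension property -- which is exactly why $F$ was chosen to have it in the construction preceding the lemma.
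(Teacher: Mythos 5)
Your proposal is correct and follows essentially the same route as the paper: both reduce the statement to verifying the Fra\"iss\'e property of $M_R$, and both do so by lifting, atom by atom, the $R$-indexed measure data of the target structure $B$ to $Q$-indexed data in $F^+_1$ via the $(R,Q)$-extension property, and then invoking the Fra\"iss\'e property of $M_Q$. Your explicit verification that the age of $M_R$ equals $\mcL_R$ is a small piece of bookkeeping the paper leaves implicit, but the substance of the argument is identical.
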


\begin{proof}
The $(R,Q)$-extension property of $F$ and the Fra\"iss\'e property of $M_Q$ combine to show that $M_R$ has the Fra\"iss\'e property. Indeed, let  $A$ be a clopen partition of $X_\mcL$, and pick an embedding $\alpha \colon A \to B \in \mcL_R$. For each atom $a$ of $A$, we let $(b^a_i)_{i \in I_a}$ denote the atoms of $B$ which are contained in $\alpha(a)$. The measures of each $b^a_i$ correspond to maps $f^a_i \in F^+_1$ such that $\sum_{i \in I_a} f^a_i(r)= \mu_a(r)$ for all $r \in R$. Then, the $(R,Q)$-extension property lets us pick maps $g^a_i \in F^+_1$ such that ${g^a_i}_{|R}={f^a_i}_{|R}$ and $\sum_{i \in I_a} g^a_i(q) = \mu_q(a)$ for all $q \in Q$.

Using the Fra\"iss\'e property of $M_Q$, we can then write each atom $a$ of $A$ as a union of clopen sets $U^a_i$ such that $\mu_q(U^a_i)=g^a_i(q)$ for all $q \in Q$; the map $\beta \colon b^a_i \mapsto U^a_i$ is then an embedding of $B$ inside $M_R$ such that $\beta \circ \alpha(a)=a$ for all $a \in A$.

The argument for $M_{g(Q)}$ is similar.

\end{proof}

Since $\mcL_R$, $\mcL_{g(Q)}$ both satisfy the conditions of Theorem \ref{Fraissesimplex}, we now have three dynamical simplices $S(\mcL_{g(Q)}) \subset S(\mcL_R) \subset S(\mcL_Q)$. Our last remaining task is to prove that there exists $h \in \Homeo(X)$ such that $h_*S(\mcL_Q)= S(\mcL_{g(Q)})$. Indeed, then the dynamical simplices $S(R)=S(\mcL_R)$, $S(Q)=S(\mcL_Q)$ satisfy the  desired conditions.

\begin{proof}[End of the proof of Theorem \ref{t:badspeedups}]

We define $N_Q= (\Clop(X_\mcL), (\nu_q)_{q \in Q}))$ by setting $\nu_q= \mu_{g(q)}$; $N_Q$ is a homogeneous $Q$-structure since $M_{g(Q)}$ is homogeneous, hence $N_Q$ is the Fra\"iss\'e limit of its age. The condition $\{f \circ g \colon f \in F^+_1\}= F^+_1$, combined with the $(g(Q),Q)$ extension property of $F$, says that the ages of $N_Q$ and $M_Q$ are the same; thus, $M_Q$ and $N_Q$ are isomorphic. Hence there is an automorphism $h$ of $\Clop(X_\mcL)$, equivalently a homeomorphism $h$ of $X_\mcL$, such that
$$\forall A \in \Clop(X_\mcL) \ \nu_q(h(A))= \mu_q(A)\ . $$
This amounts to stating that $h_* \mu_q = \nu_{g(q)}$ for all $q \in Q$, and in particular $h_*(S(\mcL_Q))= S(\mcL_{g(Q)})$.
\end{proof}

\bibliography{mybiblio}
\end{document}